\documentclass[12pt]{article}
\usepackage{amsmath,amssymb,graphicx,amsthm}
\usepackage{subfig}
\usepackage{hyperref}

\usepackage[utf8]{inputenc}

\newtheorem{theorem}{Theorem}[section]
\newtheorem{lemma}[theorem]{Lemma}
\newtheorem{corollary}[theorem]{Corollary}
\newtheorem{observation}[theorem]{Observation}

\newtheorem{claim}[theorem]{Claim}
\newtheorem{definition}[theorem]{Definition}

\newcommand\cref[1]{Corollary~\ref{cor:#1}}

\textheight=8in \textwidth=6.6in \topmargin=0.3in \oddsidemargin=0in
\evensidemargin=0in

\begin{document}

\title{Saturation problems about forbidden $0$-$1$ submatrices}

\author{Radoslav Fulek\thanks{The University of California San Diego, La Jolla, CA, USA.}\and Bal\'azs Keszegh\thanks{Alfr\'ed R{\'e}nyi Institute of Mathematics and MTA-ELTE Lend\"ulet Combinatorial Geometry Research Group. Research supported by the Lend\"ulet program of the Hungarian Academy of Sciences (MTA), under the grant LP2017-19/2017 and by the National Research, Development and Innovation Office -- NKFIH under the grant K 132696.}}

\date{}

\maketitle
\begin{abstract}
 A $0$-$1$ matrix $M$ is \emph{saturating}  for a  $0$-$1$ matrix $P$ if $M$ does not contain a submatrix that can be turned into $P$ by changing some $1$ entries to $0$ entries, and  changing an arbitrary $0$ to $1$  in $M$ introduces such a submatrix in $M$.  In saturation problems for $0$-$1$ matrices we are interested in estimating the minimum number of $1$ entries in an $m \times n$  matrix that is saturating for $P$, in terms of $m$ and $n$.
 In other words, we wish to give good estimates for the \emph{saturation function} of $P$.
Recently, Brualdi and Cao  initiated  the study of saturation problems in the context of  $0$-$1$ matrices. 

We extend their work in several directions. We prove that every $0$-$1$ forbidden matrix has its  saturation function  either in $\Theta(1)$ or $\Theta(n)$  in the case when we restrict ourselves to square saturating matrices.  Then we give a partial answer to a question posed by  Brualdi and Cao about the saturation function of $J_k$, which is obtained from the identity matrix $I_k$ by  putting the first row after the last row.
Furthermore, we exhibit a $5\times 5$ permutation matrix with the saturation function bounded from the above by a fixed constant. We complement this result by identifying large classes of $0$-$1$ matrices with linear saturation function. Finally, we completely resolve the related semisaturation problem as far as the constant vs. linear dichotomy is concerned.
\end{abstract}

\section{Introduction}
The study of extremal questions is a very popular theme in the area of combinatorics. Broadly speaking, in an extremal problem we are usually interested in an estimation of the maximum density of combinatorial objects, such as graphs or matrices, avoiding a given substructure that we call the \emph{forbidden subgraph or submatrix}. In graphs, the density is simply the number of edges. For several extremal problems on (hyper)graphs their so-called saturation counterpart was also investigated \cite{2020saturation,dudek2013minimum,erdoshajnalmoon,ferrara2017saturation,frankl2020vc,Gerbner2013,keszegh2020induced,kaszonyituza}. In a \emph{saturation} problem, instead of the maximum density we are interested in estimating  the smallest (\emph{critical}) number of (hyper)edges 
such that adding an arbitrary (hyper)edge introduces the forbidden sub(hyper)graph. 
A variant of saturation problems is the so-called \emph{semisaturation} (sometimes called strong saturation or oversaturation) problem \cite{furedikim}, in which we are interested in the smallest (\emph{critical}) number of (hyper)edges such that adding an arbitrary (hyper)edge introduces a new copy of the forbidden sub(hyper)graph. 

Extremal problems on ordered bipartite graphs translate to extremal problems on $0$-$1$ matrices, which we study in the present note. Before giving a small history of this area, we first define the three functions that we are concerned with.

\bigskip
{\bf Extremal function.}
A $0$-$1$ matrix $M$ \emph{contains} a \emph{pattern} $P$, which is a not-all-zeros $0$-$1$ matrix, if $M$ contains a submatrix $P'$ that can be transformed into the matrix $P$ by replacing some (potentially none) $1$ entries with $0$ entries. The matrix $P'$ is an \emph{occurrence} of $P$ in $M$.  A $0$-$1$ matrix $M$ \emph{avoids} a pattern $P$ if $M$ does not contain $P$.
For a $0$-$1$ matrix $M$, the weight of $M$ is the number of $1$ entries of $M$. Let $ex(P,m,n)$ denote the maximal weight of a $0$-$1$ matrix of size $m\times n$ that avoids $P$ as a submatrix. We denote $ex(P,n)=ex(P,n,n)$.

Usually, $P$, the matrix to be avoided, is referred to as a forbidden or excluded pattern. Square-shaped patterns that have exactly one $1$ entry in every row and column are called permutation patterns. 

\bigskip
{\bf Saturation function.}
A matrix $M$ is \emph{saturating} for a pattern $P$ if it avoids $P$ as a submatrix and is maximal with this property, that is, if one changes any $0$ entry to a $1$ entry in the matrix $M$ then the resulting matrix $M'$ contains $P$. Let $sat(P,m,n)$ denote the minimal weight of a $0$-$1$ matrix $M$ of size $m\times n$ saturating for $P$. We denote $sat(P,n)=sat(P,n,n)$. The matrix $M$ is \emph{saturating} for $P$.

\bigskip
{\bf Semisaturation function.}
Let $ssat(P,m,n)$ denote the minimal weight of a $0$-$1$ matrix $M$ of size $m\times n$ such that if one changes any $0$ entry to a $1$ entry in the matrix $M$ then the resulting matrix $M'$ contains a new copy of $P$. Note that we do not require that $M$ avoids $P$ as a submatrix. We denote $ssat(P,n)=ssat(P,n,n)$.  The matrix $M$ is \emph{semisaturating} for $P$.

\begin{observation}
\label{obs:mon}
 By definition it always holds that
	$$ssat(P,m,n)\le sat(P,m,n)\le ex(P,m,n)$$ and if $sat(P,m,n)=ex(P,m,n)$ then every maximal matrix avoiding $P$ has the same weight.
\end{observation}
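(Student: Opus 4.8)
The plan is to verify all three assertions directly from the definitions; no construction or counting is needed, so the write-up will be short.

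For the first inequality I would fix a matrix $M$ of size $m\times n$ that is saturating for $P$ and of minimum weight, so that its weight equals $sat(P,m,n)$. Since $M$ avoids $P$, it contains no occurrence of $P$ at all. By the definition of a saturating matrix, changing any $0$ entry of $M$ to a $1$ yields a matrix $M'$ that contains $P$, hence contains at least one occurrence of $P$; because $M$ contained none, this occurrence is automatically new. Thus $M$ is semisaturating for $P$, and as $ssat(P,m,n)$ is the minimum weight over all $m\times n$ semisaturating matrices, we conclude $ssat(P,m,n)\le sat(P,m,n)$.

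For the second inequality I would take a matrix $M$ of size $m\times n$ that avoids $P$ and has maximum weight, so its weight is $ex(P,m,n)$. I claim $M$ is saturating for $P$: it avoids $P$ by choice, and if some $0$ entry of $M$ could be switched to $1$ while still avoiding $P$, the result would be an $m\times n$ matrix avoiding $P$ of weight $ex(P,m,n)+1$, contradicting the maximality of $ex$. (If $M$ has no $0$ entries at all, the maximality requirement in the definition of saturating holds vacuously.) Hence $M$ is saturating, and since $sat(P,m,n)$ is a minimum over such matrices, $sat(P,m,n)\le ex(P,m,n)$.

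For the last assertion, let $M$ be an arbitrary $m\times n$ matrix that is maximal with respect to avoiding $P$ — that is, saturating for $P$. Then its weight is at least $sat(P,m,n)$, since $sat$ is the minimum weight among such matrices, and at most $ex(P,m,n)$, since $M$ avoids $P$ and $ex$ is the maximum weight among matrices avoiding $P$. Under the hypothesis $sat(P,m,n)=ex(P,m,n)$, these two bounds pin the weight of $M$ to this common value, so every maximal $P$-avoiding matrix has exactly weight $ex(P,m,n)$. Since the whole argument is a matter of unwinding definitions, I do not expect a real obstacle; the one point worth stating explicitly is that a saturating matrix contains no occurrence of $P$ whatsoever, so for such matrices ``introduces a new occurrence'' and ``introduces an occurrence'' mean the same thing, which is precisely what drives the first inequality.
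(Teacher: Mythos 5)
Your proof is correct and is exactly the definitional unwinding the paper intends (the paper states this observation without proof, as "by definition"). The one point you rightly make explicit — that a saturating matrix contains no occurrence of $P$, so any occurrence created by adding a $1$ is automatically new — is the only place where anything needed to be said.
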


\bigskip
{\bf History of the extremal problem.}
The study of the asymptotic behavior of $ex(P,n)$ arose in the  running time analysis of a shortest path geometric algorithm due to Mitchell~\cite{mitchell19921}. Soon after a strong impetus for the development of this theory came from discrete geometry~\cite{furedi1990maximum,furedi1992davenport}. The main highlight of the subsequent developments is  
a proof of the famous Stanley-Wilf conjecture~\cite{marcus2004excluded} by Marcus and Tardos via Klazar's argument~\cite{klazar2000furedi} showing that the conjecture holds once we prove that $ex(P,n)=O(n)$
for all permutation matrices $P$.  Although   
the order of magnitude of $ex(P,n)$ was researched extensively for many families of patterns $P$, see e.g.,~\cite{fulek2009linear,geneson2009extremal,keszegh2009linear,marcus2004excluded,pach2006forbidden,pettie2011degrees,tardos20050},
the asymptotic behavior of $ex(P,n)$ for  natural families of patterns such as trees is still far from being well-understood.

In the study of $ex(P,n)$ two (overlapping) main tendencies can be pinpointed, first determining $ex(P,n)$ for specific matrices, e.g., in \cite{tardos20050} all patterns with weight at most $4$ and in~\cite{fulek2009linear,keszegh2009linear} further patterns of weight $5$ are considered. It turns out that the extremal function is always at least linear but can have various bigger growth as well, such as $\Theta(n\log n)$ and $\Theta(n^{3/2})$. Second, it is easy to see that the growth rate is at least linear and at most quadratic and thus it gained a considerable attention to determine the patterns with linear or quasi-linear extremal functions. In this quest the most notable result is that $ex(P,n)=\Theta(n)$ for all permutation patterns \cite{marcus2004excluded}, which was generalized to patterns that we get from permutation patterns by multiplying columns \cite{geneson2009extremal,keszegh2009linear}, where they also show that this implies that there exist infinite many containment-minimal non-linear patterns. Other useful tools are various operations which can extend or combine patterns with linear extremal functions to get new such patterns~\cite{furedi1990maximum,tardos20050,keszegh2009linear}. Recently, patterns with quasi-linear extremal functions gained attention, it is conjectured that all patterns that are incidence matrices of bipartite trees are such~\cite{pach2006forbidden,trees}.

\bigskip
{\bf History of the saturation problem.}
While extremal problems on $0$-$1$ matrices have a long history, only recently Brualdi and Cao \cite{brualdi2020pattern} initiated the study of the  saturation problems for $0$-$1$ matrices,
that is, the study of $sat(P,m,n)$ for various patterns $P$. 
In \cite{brualdi2020pattern} it is proved that all maximal matrices avoiding the identity matrix $I_k$ have the same weight:
\begin{theorem}\label{thm:ik}\cite{brualdi2020pattern}
	 $sat(I_k,m,n)=ex(I_k,m,n)=(k - 1)(m + n - (k - 1))$ (if $m, n \ge k$).
\end{theorem}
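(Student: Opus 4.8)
The plan is to prove $ex(I_k,m,n)=(k-1)(m+n-(k-1))$ and then show that every \emph{maximal} (i.e.\ saturating) $I_k$-avoiding matrix already has exactly this weight; since such a matrix avoids $I_k$ its weight is at most $ex$, and with the bound $sat\le ex$ from the Observation this forces $sat=ex=(k-1)(m+n-(k-1))$. For the extremal number I would use the ``level'' function: in an $I_k$-avoiding $M$, assign to each $1$-entry $(i,j)$ the length $\ell(i,j)\in\{1,\dots,k-1\}$ of the longest chain of $1$-entries ending at $(i,j)$ in the strict product order ($(i_1,j_1)<(i_2,j_2)$ iff $i_1<i_2$ and $j_1<j_2$). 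Such a chain occupies $\ell(i,j)$ distinct rows and columns, so all level-$t$ entries lie in rows $\ge t$ and columns $\ge t$, and no two of them form an increasing pair (that would raise a level); hence at most $(m-t+1)+(n-t+1)-1$ of them exist, and summing over $t$ gives $ex\le(k-1)(m+n-(k-1))$. The matrix that is $1$ exactly on the first $k-1$ rows and first $k-1$ columns avoids $I_k$ (a $k$-chain would need a $1$ in some row $\ge k$ and column $\ge k$) and attains this weight, so equality holds.

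For the saturation bound it is cleaner to count zeros: I must show a maximal $I_k$-avoiding $M$ has at most $(m-k+1)(n-k+1)=mn-(k-1)(m+n-(k-1))$ zeros. For a position $(a,b)$ let $u(a,b)$ and $d(a,b)$ be the lengths of the longest chains of $1$-entries lying strictly above-left, resp.\ strictly below-right, of $(a,b)$. Their concatenation (the top of the first strictly precedes the bottom of the second) is a $1$-chain of length $u(a,b)+d(a,b)$, so $I_k$-avoidance gives $u(a,b)+d(a,b)\le k-1$ everywhere; and if $(a,b)$ is a zero, maximality says flipping it creates an $I_k$, which must pass through $(a,b)$, forcing $u(a,b)+d(a,b)\ge k-1$. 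Hence $u+d=k-1$ at every zero. A chain of length $u(a,b)$ occupies $u(a,b)$ rows above $a$ and $u(a,b)$ columns left of $b$, so $a,b\ge u(a,b)+1$, and likewise $a\le m-d(a,b)$, $b\le n-d(a,b)$. Therefore the map $\phi(a,b):=(a+d(a,b),\,b+d(a,b))$ sends the zeros into $\{k,\dots,m\}\times\{k,\dots,n\}$, a set of size $(m-k+1)(n-k+1)$: its coordinates are at least $(u(a,b)+1)+(k-1-u(a,b))=k$ and at most $(m-d(a,b))+d(a,b)=m$, respectively $n$.

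What remains, and what I expect to be the main obstacle, is the injectivity of $\phi$ on the set of zeros. If $\phi(a,b)=\phi(a',b')$ then the two zeros lie on one diagonal $\{i-j=\mathrm{const}\}$ and are comparable, say $(a',b')<(a,b)$, with $\delta:=a-a'=b-b'=d(a',b')-d(a,b)\ge1$, whence (using $u+d=k-1$ at both) $u(a,b)=u(a',b')+\delta$. I would then derive a contradiction with $I_k$-avoidance by producing a $1$-chain of length $\ge k$: the chain realizing $u(a,b)$ and the chain realizing $d(a',b')$ together have $u(a,b)+d(a',b')=(u(a',b')+\delta)+(d(a,b)+\delta)=(k-1)+\delta\ge k$ entries, and the point is to choose the two witnessing chains so that they actually splice into a single chain. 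For $k=2$ this is immediate, since $u(a',b')=0$ forces the two witnesses to be a $1$ strictly above-left of $(a,b)$ and a $1$ strictly below-right of $(a',b')$, which form an increasing pair. For general $k$ one has to take \emph{canonical} (extremal) witnessing chains and control how they may interleave in the narrow $(\delta-1)\times(\delta-1)$ band strictly between the two zeros; I expect this bookkeeping, rather than any new idea, to be the bulk of the argument. (An alternative is induction on $m+n$, using that the last row of a maximal matrix is an initial block of $1$'s --- the columns $j$ for which flipping $(m,j)$ does not create $I_k$ are exactly those with $u(m,j)\le k-2$, a downward-closed set --- but propagating the weight through such a reduction does not look any easier.)
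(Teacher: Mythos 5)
Your extremal half is complete and correct, and your overall route is genuinely different from the paper's: the paper obtains Theorem~\ref{thm:ik} as a corollary of Theorem~\ref{thm:extendcorner}, peeling off the staircase of $m+n-1$ forced $1$ entries guaranteed by Lemma~\ref{lem:staircase} and thereby reducing the problem for $I_k$ on $m\times n$ matrices to the problem for $I_{k-1}$ on $(m-1)\times(n-1)$ matrices, iterating down to $I_1$. Your direct count of zeros via the map $\phi$ is a legitimate alternative, but, as you yourself flag, the injectivity of $\phi$ is the crux and is not proved; moreover the specific splicing you propose --- concatenating a witness chain for $u(a,b)$ with a witness chain for $d(a',b')$ --- does not work as stated once $\delta\ge 2$, because the last element of the first chain is only known to have row $<a'+\delta$ and column $<b'+\delta$, so it may fail to lie strictly above-left of the first element of the second chain. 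So as written there is a genuine gap at exactly the step you identify.

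The gap is fillable, and more cheaply than the ``bookkeeping'' you anticipate. The key is that $u$ is $1$-Lipschitz along diagonals: $u(a'+1,b'+1)\le u(a',b')+1$, because deleting the last element of a chain lying strictly above-left of $(a'+1,b'+1)$ leaves a chain lying strictly above-left of $(a',b')$. Applying this $\delta-1$ more times gives $u(a,b)\le u(a'+1,b'+1)+\delta-1$, and combined with your identity $u(a,b)=u(a',b')+\delta$ this forces $u(a'+1,b'+1)\ge u(a',b')+1$. A witness chain for $u(a'+1,b'+1)$ lies in rows $\le a'$ and columns $\le b'$ (note $u$ is defined at every position, so it does not matter whether $(a'+1,b'+1)$ is a zero), hence it splices with any witness chain for $d(a',b')$, which lies in rows $>a'$ and columns $>b'$, into a single increasing chain of $1$ entries of length at least $(u(a',b')+1)+d(a',b')=k$, contradicting $I_k$-avoidance. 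With that short lemma inserted your argument closes, and it is arguably more self-contained than the paper's staircase induction, though it is special to $I_k$ whereas Theorem~\ref{thm:extendcorner} applies to a broader class of patterns.
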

 They also give a structural description of maximal families. 
 Note that if $m<k$ or $n<k$ then no $m\times n$ matrix can contain $I_k$ and thus the only saturating matrix is the all-$1$ matrix and so $sat(I_k,m,n)=ex(I_k,m,n)=mn$ trivially.
Further, they consider the permutation pattern $J_k$ which we get from $I_k$ by putting its first row after the last row (see Figure~\ref{fig:ikjk}). (Note that we do not denote by $J_k$ an all 1 matrix which is quite common.) They show $sat(J_3,m,n)=ex(J_3,m,n)=2(m+n-2)$ and that $sat(J_k,m,n)\le(k -1)(m+
n - (k - 1))$ and conjecture that $sat(J_k,m,n)=ex(J_k,m,n)=(k -1)(m+
n - (k - 1))$ for $k>3$ as well. 

\bigskip
{\bf Our results about saturation.}
We continue this line of research. While the above specific results would perhaps give the impression that $sat(.)$ behaves always the same way or very similar to $ex(.)$, we show multiple evidence that this is not the case. The following theorem shows that compared to the growth rate of $ex(P,n)$, $sat(P,n)$ is always at most linear and exhibits a much simpler dichotomic behavior:

\begin{theorem}\label{thm:dichotomy}
	For any $k\times l$ pattern $P$, and for any fixed $n_0,m_0$, 
	\[sat(P,m,n)\le (k-1)n+(l-1)m-(k-1)(l-1),\]
	\[sat(P,n)=O(1) \text{ or } sat(P,n)=\Theta(n),\]
	\[sat(P,m_0,n)=O(1) \text{ or } sat(P,m_0,n)=\Theta(n),\]
	\[sat(P,m,n_0)=O(1) \text{ or } sat(P,m,n_0)=\Theta(m).\]
	In addition, if $sat(P,m_0,n)=O(1)$ then $sat(P,m_1,n)=O(1)$ for every $m_1>m_0$ and similarly if $sat(P,m,n_0)=O(1)$ then $sat(P,m,n_1)=O(1)$  for every $n_1>n_0$.
\end{theorem}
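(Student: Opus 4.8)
The plan is to establish the linear upper bound first, then derive the dichotomy from it. For the upper bound $sat(P,m,n)\le (k-1)n+(l-1)m-(k-1)(l-1)$, I would first observe that a $k\times l$ pattern $P$ contains at least one $1$, so any matrix containing the "staircase" pattern $I_{\max(k,l)}$ (suitably padded) contains $P$ — but more directly, the right comparison object is a matrix $M$ that already contains enough structure that flipping any $0$ creates $P$. The natural candidate is the matrix with $1$'s exactly in the first $k-1$ rows and first $l-1$ columns (a "thick corner"), which has weight $(k-1)n+(l-1)m-(k-1)(l-1)$. The key claim is that from such an $M$ one can delete $1$'s greedily while preserving the property that $M$ avoids $P$ but is saturating; since deletions only decrease weight, and the saturating matrix obtained this way has weight at most that of the corner matrix, the bound follows. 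Actually the cleaner route: show directly that \emph{some} saturating matrix has weight $\le (k-1)n+(l-1)m-(k-1)(l-1)$ by taking any maximal $P$-avoiding submatrix of the corner matrix — one must check the corner matrix itself avoids $P$ (since any occurrence of $P$ needs a $1$ outside the first $k-1$ rows \emph{or} outside the first $l-1$ columns — this uses that $P$ has a $1$ in the last row and a $1$ in the last column, which may fail, so instead use that $P$ is $k\times l$ and hence needs $k$ distinct rows and $l$ distinct columns, so it cannot fit in $k-1$ rows; the corner matrix has $1$'s only in $k-1$ rows... no: it has $1$'s in the first $k-1$ rows on \emph{all} columns, plus first $l-1$ columns on all rows, so occurrences could use the column-strip rows). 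The honest fix is: the corner matrix does contain $P$ in general, so instead start from it, repeatedly flip $0\to1$ is impossible (it's already got the corner), so instead \emph{delete} $1$'s until $P$-free and maximal; the resulting matrix is saturating with weight $\le$ the corner's weight. I should double-check that \emph{every} maximal $P$-avoiding submatrix obtained by deletion from the corner is genuinely saturating for $P$ over the \emph{full} $m\times n$ matrix (flipping a $0$ that was a $1$ in the corner brings back a corner-configuration that contains $P$; flipping a $0$ that was always $0$ — i.e. outside the corner — needs a separate argument that it creates $P$, and this is where picking the corner matrix pays off, since a single new $1$ outside the $(k-1)\times(l-1)$... hmm, outside the corner strips there is no new $1$ interacting).

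Given the upper bound, the dichotomy is the easy part: $sat(P,n)$ is trivially nonnegative, and if it is not $O(1)$ then it is unbounded, so it suffices to show it cannot grow sublinearly-but-unboundedly, i.e. that $sat(P,n)$ is either bounded or $\Omega(n)$. For this I would use a \textbf{monotonicity/amortization} argument: relate $sat(P,n)$ for different $n$ by observing that a saturating matrix for larger dimensions restricts to something controlled on smaller dimensions, or conversely that one can glue. The standard trick is: if $M$ is an $m\times n$ matrix saturating for $P$ with few $1$'s, some row (or column) is all-zero or nearly so, and flipping a $0$ in it must create $P$; tracking how many "light" rows/columns there can be gives a linear lower bound unless the whole thing is bounded. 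Concretely, I expect to show that if $sat(P,m_0,n)$ is not $O(1)$ then in any saturating matrix a constant fraction of columns each carry at least one $1$ forced by the saturation condition, giving $\Omega(n)$.

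For the last two monotonicity claims ("if $sat(P,m_0,n)=O(1)$ then $sat(P,m_1,n)=O(1)$ for $m_1>m_0$"), the plan is to take a bounded-weight saturating matrix $M$ on $m_0\times n$ and \textbf{add $m_1-m_0$ all-zero rows}; this keeps the weight bounded, still avoids $P$ (a new occurrence would have to use a new row, but the new rows are all $0$, so a row of $P$ would have to be all-zero — contradicting that $P$ is not-all-zeros only if that row is nonzero... here I must be careful: $P$ may have an all-zero row, so I should instead argue the new rows can be absorbed). The cleanest is: the extended matrix $M'$ still avoids $P$ (any submatrix of $M'$ using a new row is also realizable without it, or $P$ genuinely needs that row's pattern which is all-zero and hence already matched by any row — wrong again); the real point is that $M'$ may \emph{not} be maximal, so pass to a maximal $P$-avoiding $M''\supseteq M'$; I then need $M''$ to still have bounded weight, which requires that adding rows of zeros doesn't force many new $1$'s — and this should follow because $M$ was already saturating, so every $0$ of $M$ already creates $P$ without help from new rows, hence $M'' = M'$ on the original rows and $M''$ has $1$'s only in new rows in positions that together with old rows make $P$; bounding these is the crux.

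\textbf{Main obstacle.} The genuine difficulty is the non-trivial direction of the dichotomy — ruling out $\omega(1)$-but-$o(n)$ growth — because saturation functions are notoriously non-monotone and do not interact cleanly with adding/removing rows and columns (unlike extremal functions). Getting a clean lower bound $sat(P,n)=\Omega(n)$ whenever $sat$ is unbounded, rather than merely $\omega(1)$, will require a careful combinatorial argument identifying a linear number of independent "witnesses" of the saturation condition; I anticipate this is where most of the work lies, and where one must exploit the upper-bound construction to pin down the structure of near-optimal saturating matrices.
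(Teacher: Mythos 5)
There are two genuine gaps, one in each half of your plan. For the upper bound, the ``thick corner'' matrix (all $1$'s in the first $k-1$ rows and first $l-1$ columns) is the wrong object, and you correctly sense this but never repair it. A concrete failure: for $P=I_2'$ (the $2\times 2$ anti-diagonal pattern) the corner matrix contains $P$, and after deleting $1$'s to make it $P$-free and maximal \emph{within the corner's support} you can end up with, say, the all-$1$ first row, which is not saturating over the full $m\times n$ grid (flipping the $0$ at position $(i,n)$ for $i\ge 2$ creates no copy of $I_2'$); re-maximalizing over the whole grid can then add $1$'s outside the corner, so the weight bound is lost. The fix the paper uses is to anchor the construction at a specific $1$ entry of $P$, say in position $(k',l')$: fill the first $k'-1$ and last $k-k'$ rows and the first $l'-1$ and last $l-l'$ columns with $1$'s. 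Then every remaining $0$ lies in the ``middle'' block, flipping it creates a copy of $P$ in which it plays the role of the $(k',l')$ entry (all other entries of $P$ are found in the full strips), and the matrix avoids $P$ because the $(k',l')$ entry of any occurrence would be forced into the all-$0$ middle block. This splitting of the strips to both sides is exactly what your corner matrix is missing.

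For the dichotomy and the monotonicity claims, your proposal stops at the point where the actual idea is needed. ``A constant fraction of columns carry a forced $1$'' is not an argument, and your plan of appending all-zero rows and then passing to a maximal extension has the problem you yourself flag: re-maximalizing can force arbitrarily many new $1$'s into the appended rows, and nothing in your sketch bounds them. The mechanism that makes everything work is a \emph{blow-up of already-existing empty rows and columns}: if some $n_0\times n_0$ saturating matrix $M_0$ has weight below $n_0/c$ for a suitable constant $c$ depending on $k,l$, then by pigeonhole $M_0$ contains $k-1$ consecutive all-zero rows and $l-1$ consecutive all-zero columns; replacing these by arbitrarily many empty rows and columns yields, for every $n\ge n_0$, an $n\times n$ matrix of the \emph{same} weight that still avoids $P$ (a copy of $P$ can use at most $k-1$ of the consecutive empty rows and $l-1$ of the empty columns, so it would retract to a copy in $M_0$) and is still maximal (every $0$ of the blown-up matrix corresponds to a $0$ of $M_0$ whose flip creates a copy of $P$, and that copy survives the blow-up). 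Hence $sat(P,n)$ is either at least $n/c$ for all $n$, or eventually constant; the one-sided versions of the same duplication give the $sat(P,m_0,n)$ and $sat(P,m,n_0)$ statements and the final monotonicity claims. Without this duplication argument your outline does not rule out $\omega(1)\cap o(n)$ growth, which you correctly identify as the crux.
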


Note that when $n<k$ or $m<l$ then even the $m\times n$ matrix with all $1$ entries avoids $P$, thus $sat(P,m,n)=nm$, which shows that $n_0<k$ and $m_0< l$ are trivial cases of Theorem \ref{thm:dichotomy} where $sat(P,m_0,n)=\Theta(n)$ and $sat(P,m,n_0)=\Theta(m)$. Note also that the upper bound of Theorem \ref{thm:dichotomy} is attained by the identity matrix $I_k$ \cite{brualdi2020pattern}.

From now on for simplicity we concentrate only on $sat(P,n)$ and most of the times we omit variants of our results for $sat(P,m,n)$, although in many cases we could state such variants. 

Even though there are only two different classes of patterns with respect to the growth rate, it seems to be challenging to understand when does a pattern have a bounded saturation function and when a linear one. Our aim is to take the first steps in this direction.

First, it is already non-trivial to come up with a pattern that has a bounded saturation function, our first result is that there exists such a permutation pattern of weight $5$. Let $$Q=\begin{pmatrix} &\bullet&&&\\&&&&\bullet \\&&\bullet\\ \bullet\\&&&\bullet \end{pmatrix}$$ where $1$ entries are shown as dots and $0$ entries are not shown.
We know that $ex(Q,n)=\Theta (n)$. 

We show that $sat(Q,n)$ differs substantially from $ex(Q,n)$, as it is bounded.

\begin{theorem}\label{thm:5x5}
	The pattern $Q$ has $sat(Q,n)< 400$. 
\end{theorem}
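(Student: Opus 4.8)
The plan is to exhibit, for every sufficiently large $n$, an explicit $n\times n$ matrix $M=M_n$ with fewer than $400$ one entries (in fact a small absolute constant suffices), that avoids $Q$, and that is saturating; for the finitely many small $n$ one simply uses $sat(Q,n)\le n^2$. All one entries of $M$ will sit near the four ``corners'' of the matrix, that is, within the first $w$ or last $w$ rows and the first $w$ or last $w$ columns for a suitable constant $w$, leaving the entire ``interior'' empty. A first guiding remark is that certain entries are \emph{forced}: if $(a,b)$ is a $0$ entry of a matrix that avoids $Q$, and it is impossible to choose four further rows and four further columns so that, inside the induced $5\times 5$ submatrix, the cell $(a,b)$ lands on a one of $Q$, then turning $(a,b)$ into a one cannot create any copy of $Q$, so such a matrix can only be saturating if $(a,b)$ is already a one. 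Working out which positions have this property for the particular permutation of $Q$ pins down a bounded list of forced one entries near the corners --- the four corners $(1,1),(1,n),(n,1),(n,n)$ among them --- and I would start the construction from these.

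The key structural observation for saturation is that $Q$ is a central one at $(3,3)$ together with exactly one further one in each of the four open quadrants around it, at $(1,2)$, $(2,5)$, $(4,1)$, $(5,4)$, with the rows and columns of these four peripheral ones in a fixed cyclic order. Consequently, placing just the four one entries $(1,2)$, $(2,n)$, $(n-1,1)$, $(n,n-1)$ already guarantees that for every $0$ entry $(i,j)$ far from the boundary, say $3\le i,j\le n-2$, turning $(i,j)$ into a one creates a copy of $Q$: take rows $r_1=1<r_2=2<r_3=i<r_4=n-1<r_5=n$ and columns $c_1=1<c_2=2<c_3=j<c_4=n-1<c_5=n$, and note that the required incidences $M[r_1,c_2]=M[r_2,c_5]=M[r_3,c_3]=M[r_4,c_1]=M[r_5,c_4]=1$ all hold, using the four corner ones together with the new one at $(i,j)$, which plays the central role. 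So the only $0$ entries still needing attention are those in the ``frame'' of constant width $w$ around the boundary.

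I would split the frame into its four ``strip'' parts (top, bottom, left, right, minus the corner blocks) and the $O(1)$ entries inside the four corner blocks. For a new one in, say, the top strip --- a row among the first $w$ and a ``middle'' column $j$ --- the new one must play the role of the topmost one of $Q$, so one needs four further ones in rows below it, playing the roles of rows $2,3,4,5$ of $Q$, occupying columns $c_1<j<c_3<c_4<c_5$: one one to the left of column $j$ and three ones to the right of column $j$, all supplied by dots placed near the appropriate corners. This is the point where $w$ must be chosen not too small: one needs room for three fixed columns lying to the right of \emph{every} admissible middle column $j$ (the largest of which is $n-w$), which forces $w$ to be a fixed value such as $4$ or $5$ rather than $1$ or $2$. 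The bottom, left and right strips are treated symmetrically with the analogous role assignments, each costing only a constant number of helper ones, and the finitely many $0$ entries inside the corner blocks are dispatched individually by exhibiting a concrete copy of $Q$ (adding a bounded number of further helper ones if necessary). Adding up the four corner clusters, the total weight of $M$ stays comfortably below $400$.

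The step I expect to be the real obstacle is verifying that $M$, now carrying \emph{all} these helper ones simultaneously, still avoids $Q$: the helper ones must be placed so that no five of them, across the four corner clusters, realize the permutation of $Q$. This pulls against the saturation requirement, which wants many well-spread helpers, and I would resolve it by choosing the exact positions carefully --- for instance so that the ones of any single cluster realize among themselves only monotone patterns, and so that no ``mixed'' selection spanning two or three clusters can be made to fit $Q$ (e.g.\ because some chosen row has its only available one in a column too extreme to occupy the rank that $Q$ would assign it) --- followed by a finite case check once the positions are frozen. The remaining work is confirming that the frame case analysis is exhaustive, including the boundary values of $j$ in each strip and every cell of each corner block; once a single placement of the helper ones making all cases go through is fixed, the bound $sat(Q,n)<400$ follows immediately by counting.
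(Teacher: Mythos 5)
Your overall strategy is the one the paper uses: concentrate all $1$ entries in four constant-size corner regions, leave the interior and the boundary strips empty, and argue saturation by assigning to each new $1$ entry a role in $Q$ (the centre $(3,3)$ for interior cells, the extremal entries of $Q$ for cells in the strips). Your observation that $Q$ is a centre plus one entry in each surrounding quadrant, and your explicit choice of $(1,2),(2,n),(n-1,1),(n,n-1)$ to certify all deep-interior cells, is exactly the right engine and matches the paper's Figure~\ref{fig:right}.

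The gap is that the construction is never actually produced. The entire difficulty of the theorem sits in the step you yourself flag as ``the real obstacle'': exhibiting one fixed set of helper entries that simultaneously (a) certifies every cell of the four boundary strips and every cell of the corner blocks, and (b) avoids $Q$. Your proposal ends with ``choose the positions carefully, then do a finite case check once the positions are frozen,'' but the positions are never frozen and the check is never done, so no proof of existence of such a configuration is given. Two concrete places where your plan will strain: first, for a new entry in row $r$ of the top strip with $r$ up to $w\ge 3$, the helper playing the role of $Q$'s entry $(2,5)$ must lie in a row strictly between $r$ and the centre's row, so a single entry like $(2,n)$ does not serve all rows of the strip and you need several stacked helpers per corner, which is precisely what threatens $Q$-avoidance; second, ``dispatching individually'' the $O(w^2)$ cells inside the corner blocks by adding further helpers is circular, since each added helper creates new $0$ cells to certify. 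The paper avoids this last trap with a device you are missing: it fixes only the few dots needed for the strip argument, verifies $Q$-avoidance of that partial configuration by showing that only two positions could possibly serve as the centre of a copy of $Q$ (and neither does), and then completes the corner blocks \emph{greedily} to a maximal $Q$-avoiding configuration inside the $20\times 20$ union of white areas. Maximality then certifies all corner-block cells for free, and the weight bound $400$ is just the area of the white regions. Without either that trick or an explicit frozen matrix plus verification, your argument establishes the plan of a proof but not the theorem.
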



In contrast, we exhibit a wide class of patterns for which the saturation function is linear:

\begin{claim}\label{claim:twoineveryrow}
	If every row (or column) of a pattern $P$ contains at least two $1$-entries then $sat(P,n)=\Theta(n)$.
\end{claim}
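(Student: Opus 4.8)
The plan is to establish the matching lower bound $sat(P,n)=\Omega(n)$; the linear upper bound, namely $sat(P,n)\le (k+l-2)n-(k-1)(l-1)$ for a $k\times l$ pattern $P$, is already provided by Theorem~\ref{thm:dichotomy} (so is the dichotomy, hence in principle it would even suffice to rule out the $O(1)$ case). In fact I would prove the cleaner statement that every $n\times n$ matrix saturating for $P$ has at least $n$ one-entries as soon as $n\ge\max(k,l)$; for smaller $n$ no $n\times n$ matrix contains $P$, so the all-ones matrix is the unique saturating matrix and $sat(P,n)=n^2\ge n$.

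By passing to the transpose of $P$ (which just transposes all matrices in sight) we may assume that every \emph{row} of $P$ has at least two $1$-entries. Let $M$ be an $n\times n$ matrix saturating for $P$ with $n\ge\max(k,l)$. The heart of the argument is that $M$ can have no all-zero row. Suppose to the contrary that row $i$ of $M$ is all zeros. Pick any entry $(i,j)$ — it is a $0$ entry — and flip it to $1$, obtaining $M'$. Since $M$ is saturating, $M'$ contains an occurrence $P'$ of $P$, on some rows $r_1<\dots<r_k$ and columns $c_1<\dots<c_l$ of $M'$. Because $M$ itself avoids $P$ while $M'$ agrees with $M$ on every row other than $i$, this occurrence must use row $i$; say $i=r_a$. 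By the definition of an occurrence, $P'$ has a $1$ wherever $P$ does, so the $a$-th row of $P'$ — that is, row $i$ of $M'$ restricted to columns $c_1,\dots,c_l$ — has a $1$ in every position where the $a$-th row of $P$ does, hence in at least two positions. But row $i$ of $M'$ contains exactly one $1$-entry, namely the flipped $(i,j)$, so its restriction to any set of columns has at most one $1$ — a contradiction.

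Therefore $M$ has a $1$-entry in each of its $n$ rows, so $sat(P,n)\ge n$, and combining with the upper bound of Theorem~\ref{thm:dichotomy} we conclude $sat(P,n)=\Theta(n)$; the same reasoning gives the analogous bound for $sat(P,m,n)$. There is no genuinely hard step here: the only point that needs care is verifying that the newly created occurrence is forced to involve the emptied row — which follows because $M$ and $M'$ differ in a single entry of that row — after which the two-ones-per-row hypothesis immediately rules that out, since the emptied row of $M'$ carries only the single flipped $1$.
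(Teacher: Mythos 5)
Your proof is correct and follows essentially the same route as the paper: both argue that a saturating matrix cannot have an all-zero row, because a newly flipped $1$ in such a row would have to serve as the sole $1$ in a row of the new occurrence of $P$, contradicting the two-ones-per-row hypothesis, whence the weight is at least $n$. You simply spell out in more detail the step the paper states in one line, and the upper bound from Theorem~\ref{thm:dichotomy} is used identically.
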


\begin{theorem}\label{thm:emptyfirstrow}
	If the first (or last) row (or column) of a pattern $P$ is all-$0$ then $sat(P,n)=\Theta(n)$.
\end{theorem}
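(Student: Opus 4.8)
The plan is to supply the matching lower bound $sat(P,n)\ge n$, since the linear upper bound $sat(P,n)=O(n)$ is already provided by \tref{dichotomy}. First I would reduce to a single case: transposing $P$, reflecting it top-to-bottom, or reflecting it left-to-right preserves the size of every matrix and the containment relation, hence carries matrices saturating for $P$ to matrices saturating for the transformed pattern; composing these operations, the cases ``last row all-$0$'', ``first column all-$0$'', and ``last column all-$0$'' all reduce to the case where the \emph{first} row of $P$ is all-$0$, which I assume from now on.

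The heart of the argument is the following observation: \emph{if a matrix $M$ avoids $P$, then changing any $0$-entry in the first row of $M$ to a $1$ produces a matrix $M'$ that still avoids $P$.} To prove it I would suppose $M'$ contained an occurrence of $P$, say on rows $r_1<\dots<r_k$ and columns $c_1<\dots<c_l$. If this occurrence does not involve the changed entry, then exactly the same submatrix occurs in $M$, contradicting that $M$ avoids $P$. Otherwise the changed entry lies in one of the rows $r_1,\dots,r_k$; being in the top row of $M$, it must lie in row $r_1$, at some position $(r_1,c_j)$. Since the first row of $P$ is all-$0$, the entry of $P$ in that position is $0$, so the submatrix of the \emph{original} matrix $M$ on rows $r_1,\dots,r_k$ and columns $c_1,\dots,c_l$ still dominates $P$ entrywise; hence $M$ contains $P$, a contradiction.

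Finally I would conclude as follows. Let $M$ be an $n\times n$ matrix saturating for $P$. By the definition of saturation, no $0$-entry of $M$ can be changed to $1$ without creating an occurrence of $P$; but the observation shows that any $0$-entry in the first row of $M$ could be so changed while keeping $M$ free of $P$. Therefore $M$ has no $0$-entry in its first row, i.e.\ the first row of $M$ is all-$1$, so $M$ has at least $n$ one-entries and $sat(P,n)\ge n$. Combined with \tref{dichotomy} this gives $sat(P,n)=\Theta(n)$. I do not anticipate a genuine obstacle here: the whole proof rests on the single idea that a $0$ sitting under (the position of) an all-$0$ row of $P$ is ``safe'' to flip, and the only point needing a little care is the symmetry reduction of the four cases to one.
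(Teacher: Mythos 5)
Your proof is correct and follows essentially the same route as the paper: reduce by symmetry to the case of an all-$0$ first row, observe that a $1$ added in the first row of $M$ could only play the role of an entry in the all-$0$ first row of $P$ and hence cannot create an occurrence, and conclude that a saturating matrix must have an all-$1$ first row, giving $sat(P,n)\ge n$. Your write-up just spells out the key observation in more detail than the paper does; no discrepancy to report.
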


By Observation~\ref{obs:mon}, both Claim~\ref{claim:twoineveryrow} and Theorem~\ref{thm:emptyfirstrow} are implied by a stronger claim, Theorem~\ref{thm:ssat}, stated below.

Note that it is an easy statement that adding empty rows and columns to $P$ does not change the order of magnitude of $ex(P,n)$, while for $sat(P,n)$ the previous theorem shows all that we know about the effect of such an operation on the saturation function. Thus, even for such a seemingly harmless operation we can see a non-trivial behavior, which is many times the case when considering saturation problems.

\begin{theorem}\label{thm:A00B}
	If $P=\begin{pmatrix}
      A  & {\bf 0} \\ {\bf 0} & B
    \end{pmatrix}$, for some $0$-$1$ submatrices\footnote{With a slight abuse of notation, throughout the paper ${\bf 0}$ denotes an all-$0$ matrix of any size.} $A,B\not={\bf 0}$, then $$sat(P,n)=\Theta(n).$$
\end{theorem}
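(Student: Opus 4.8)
\emph{Proof idea.}
The upper bound $sat(P,m,n)\le (k-1)n+(l-1)m-(k-1)(l-1)$ is the general bound of \tref{dichotomy}, so by the dichotomy of \tref{dichotomy} it suffices to show that $sat(P,n)$ is not $O(1)$, i.e.\ that once $n$ is large no matrix of bounded weight is saturating for $P$. First reduce: if the first or last row or column of $P$ is all-$0$ --- equivalently, if the first row or column of $A$, or the last row or column of $B$, is all-$0$ --- then $sat(P,n)=\Theta(n)$ already by \tref{emptyfirstrow}, so assume this is not the case. I also record that $P$ contains $I_2$: a $1$ of the block $A$ together with a $1$ of the block $B$ spans a copy of $I_2$, since the off-diagonal blocks of $P$ are $0$. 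Hence, if a matrix $M$ is saturating for $P$ \emph{and} happens to avoid $I_2$, then $M$ is saturating for $I_2$ as well: $M$ avoids $I_2$, and turning any $0$ into a $1$ creates a copy of $P$, one $1$ from each of whose blocks (one of the two being the new entry, since $M$ avoided $P$) forms a copy of $I_2$ using the new entry; so by \tref{ik} such an $M$ has weight at least $sat(I_2,n)=2n-1$.

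It remains to bound the weight of an $M$ that is saturating for $P$ but contains $I_2$. Put $D_A=\{(i,j):M[1..i,1..j]\text{ contains }A\}$ and $D_B=\{(i,j):M[i{+}1..n,j{+}1..n]\text{ contains }B\}$. Two facts drive everything. (1)~$D_A\cap D_B=\emptyset$: a common element $(i,j)$ would give a copy of $P$ in $M$ with the block $A$ placed in the top-left $i\times j$ corner and the block $B$ in the bottom-right corner (the $0$-blocks of $P$ being unconstrained), contradicting that $M$ avoids $P$. (2)~Every $0$-entry of $M$ lies in $D_A\cup D_B$: turning a $0$-entry $(a,b)$ into a $1$ creates a copy of $P$ that must use this entry at a $1$ of $P$; if that $1$ lies in the block $A$, then the $B$-block of the copy sits in $M$ strictly to the south-east of $(a,b)$, so $(a,b)\in D_B$, and symmetrically $(a,b)\in D_A$ if it lies in the block $B$. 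Note $D_A$ is closed under increasing either coordinate and $D_B$ under decreasing either coordinate; so $D_A,D_B$ are two disjoint monotone staircase regions whose union misses only the at most $\mathrm{weight}(M)$ positions of the $1$-entries of $M$.

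The heart of the proof is to turn this into a linear lower bound, using that $D_A$ and $D_B$ are not arbitrary staircases but are witnessed by copies of the \emph{fixed} matrices $A$ and $B$ (which have $\ge 1$ nonzero row and $\ge 1$ nonzero column each). Assume for contradiction $\mathrm{weight}(M)\le C$ with $n$ huge, so $M$ has at least $n-C$ all-$0$ rows and all-$0$ columns and only boundedly many rows/columns carry a $1$. The key local observation is: if $(a,b)$ lies in an all-$0$ row and an all-$0$ column, then in the copy of $P$ created by flipping $(a,b)$ the new entry is the unique $1$ of its row and of its column, so it plays the role of an entry of $A$ or of $B$ that is alone in its row and column of that block, while \emph{all other $1$'s of the copy are old $1$'s of $M$}; so $M$ itself contains a copy of $B$ strictly south-east of $(a,b)$ or a copy of $A$ strictly north-west. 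Running this over all $\Omega(n^2)$ such cells, together with the fact that any copy of $A$ (resp.\ $B$) must occupy several distinct $1$-columns of $M$, forces the $1$-columns (and symmetrically the $1$-rows) of $M$ to form a block reaching essentially to both edges of the grid, so $M$ has $\Omega(n)$ nonzero columns --- contradicting $\mathrm{weight}(M)\le C$. Combined with the previous paragraph this gives $sat(P,n)=\Theta(n)$.

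The step I expect to be the real obstacle is this last extraction of a linear bound, specifically the degenerate regime in which $D_A$ and $D_B$ meet along a nearly constant ``wall'' column over almost all rows: there plain membership in $D_A\cup D_B$ is too weak, and one must use the strengthened consequence of saturation above --- that flipping a cell lying in an all-$0$ column (or row) next to the part of the grid occupied by $M$'s $1$'s must complete a \emph{full} copy of $A$ or $B$ through that cell, something that cannot be done near the edges of the grid. An alternative, perhaps cleaner, route is a compactness reduction: a hypothetical family of bounded-weight saturating matrices, after passing to a subsequence sharing a common nonzero ``core'' $N$, would give a finite matrix $N$ avoiding the reduced pattern $\left(\begin{smallmatrix}\bar A&{\bf 0}\\{\bf 0}&\bar B\end{smallmatrix}\right)$ (where $\bar A,\bar B$ are $A,B$ with their all-$0$ rows and columns deleted) yet such that appending a lone $1$ to $N$ in any position --- in particular far to the south-east, or far to the north-west --- creates that pattern; since a lone $1$ can only realise an entry of $\bar A$ or $\bar B$ that is alone in its row and column, a case analysis on whether $\bar A$ (resp.\ $\bar B$) has an isolated $1$ in its north-west (resp.\ south-east) corner closes the argument, the case where both corners carry an isolated $1$ being the one that needs the most care (it can be peeled off and treated by induction on the size of $P$).
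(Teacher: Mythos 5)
Your preparatory observations are correct (the reduction via \tref{emptyfirstrow}, the $I_2$-avoiding case via \tref{ik}, the disjointness of $D_A$ and $D_B$, the fact that every $0$ entry lies in $D_A\cup D_B$, and the ``lone new $1$'' consequence of flipping a cell in an all-$0$ row and column), but the proof is not complete: the entire lower bound in the main case rests on the assertion that these facts ``force the $1$-columns of $M$ to form a block reaching essentially to both edges of the grid,'' and no argument is supplied for that step. You flag it yourself as the real obstacle (the regime where $D_A$ and $D_B$ meet along a near-constant wall), and the alternative compactness route is likewise only a sketch whose hardest case is deferred to an unspecified induction. As written, nothing rules out a bounded-weight saturating $M$; establishing that is exactly the content of the theorem, so the crucial step is missing rather than merely tedious.

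For comparison, the paper's argument is both stronger and much shorter: it shows that a matrix $M$ saturating for $P$ can have no all-zero row (and, symmetrically, no all-zero column), which immediately gives $sat(P,n)\ge n$. The mechanism is a sweep along a hypothetical all-zero row $l$. Flipping $m_{l1}$ must create a copy of $P$ using the new entry, and that entry must play the role of a $1$ of the $A$-block, since a $B$-block role would require columns of $A$ to the left of column $1$; symmetrically the flip at $m_{ln}$ cannot be realized in the $A$-block. Hence there is a largest $k'<n$ for which the flip at $(l,k')$ is realized in an $A$-block of some occurrence $P'$; its $B$-part $B'$ consists of old $1$s in rows $>l$ and columns $\ge k'+1$, while the occurrence $P''$ created by flipping $(l,k'+1)$ has its new entry in the $B$-block, so its $A$-part $A''$ consists of old $1$s in rows $<l$ and columns $\le k'$. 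Then $A''$ and $B'$ together form a copy of $P$ already present in $M$, a contradiction. This is essentially your observation (2) specialized to a single empty row and combined with a transition (intermediate-value) argument; it needs no case split on $I_2$, no appeal to \tref{emptyfirstrow}, and no global analysis of the staircase regions. If you want to salvage your approach, the quickest fix is to replace the last two paragraphs by this one-row sweep.
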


It is an easy observation that the function $ex(.)$ is monotone for pattern containment, that is, if from a pattern $P$ we get pattern $P'$ by replacing some $0$ entries with $1$ entries then $ex(P,n)\le ex(P',n)$. However, the function $sat(.)$ shows again a different behavior, as it does not have this natural property. Indeed, $sat(Q,n)=O(1)$, while if we delete, e.g., the leftmost $1$ entry from $Q$, we get a pattern $Q'$ which has $sat(Q',n)=\Theta(n)$ by Theorem \ref{thm:emptyfirstrow}. Also, if we replace all $0$-entries in $Q$ by a $1$ entry then we get the full-$1$ $5\times 5$ pattern $Q''$ which is easy to see that has $sat(Q'',n)=\Theta(n)$. Together, these imply that $sat(.)$ cannot be monotone increasing nor decreasing for pattern containment.

We also make progress about the saturation function of $J_k$, which was asked in \cite{brualdi2020pattern}:

\begin{theorem}\label{thm:jk}
	$sat(J_k,m,n)\ge (k-2)\max(m,n)+m+n-1-\frac{(k-2)(k-1)}{2}$ (if $m,n\ge k$).
\end{theorem}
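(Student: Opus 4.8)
The plan is to prove a lower bound on the weight of any matrix $M$ that is saturating for $J_k$, so in particular $M$ avoids $J_k$. I would begin by recalling the structural fact underlying \tref{ik}: since $J_k$ contains $I_k$ as a submatrix (indeed $J_k$ is just a cyclic shift of rows of $I_k$, so any occurrence of $I_k$ yields an occurrence of $J_k$ after a suitable reindexing — more carefully, avoiding $J_k$ does not immediately give avoiding $I_k$, so I must be more delicate). The right way in is to look directly at a saturating matrix $M$ and extract a long monotone-type structure. Concretely, in a matrix avoiding $J_k$ one cannot have $k$ ones $a_{i_1 j_1}, \dots, a_{i_k j_k}$ with $i_1 < i_2 < \dots < i_k$ and $j_2 < j_3 < \dots < j_k < j_1$ (that is the pattern $J_k$: the first row of $I_k$ moved to the bottom). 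I would use this forbidden configuration as the combinatorial obstruction.

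**Counting via a potential/charging argument.** For each of the $mn$ positions of $M$ that is a $0$, saturation forces that turning it into a $1$ creates a copy of $J_k$; hence every $0$ position "completes" a $J_k$, i.e. there exist $k-1$ ones of $M$ together with that position forming the forbidden pattern. I would then set up a charging scheme: assign to each row and each column a count of how many ones it contains, and argue that the presence of long increasing chains of ones forces near-linear density. The cleanest route is probably to consider, for each row $i$, the leftmost and rightmost one in that row, and similarly for columns, and to show that these "extreme" ones, together with the $J_k$-avoidance, force that most rows and columns individually carry at least $k-2$ ones, contributing $(k-2)\max(m,n)$, while a separate diagonal-type argument (à la the $I_k$ lower bound) contributes the remaining $m+n-1$, with the subtracted term $\binom{k-1}{2}$ accounting for the overlap/boundary corrections exactly as in \tref{ik}.

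**The diagonal skeleton.** More precisely, I expect the argument to run as follows. First show that $M$ (being maximal $J_k$-avoiding, hence in particular having no huge empty region) must contain a "staircase" of ones of length $m+n-1 - \binom{k-1}{2}$ roughly as in the $I_k$ analysis — this is because a $0$ that is "far" in both coordinates from all ones could be flipped without creating $J_k$. Then, superimposed on this, show that an additional $k-2$ ones per row (or per column, whichever is longer) are forced: if some row had fewer than $k-2$ ones, one could find a $0$ in that row whose flip fails to create $J_k$, because building $J_k$ through that position requires enough ones spread out in a specific increasing-then-wrap pattern, and a sparse row cannot supply the needed "middle" part of the pattern. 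Summing the two contributions and subtracting the double-counted boundary gives the claimed bound $(k-2)\max(m,n) + m + n - 1 - \frac{(k-2)(k-1)}{2}$.

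**Main obstacle.** The hard part will be making the "$k-2$ ones per row" step rigorous without over- or under-counting: one must show that a deficient row genuinely admits a flippable $0$, which means carefully analyzing which sub-configurations of $J_k$ can pass through a given position and checking that a row (or column) with few ones cannot participate in enough of them. There is also a subtlety in choosing consistently whether to charge rows or columns so as to get $\max(m,n)$ rather than $\min(m,n)$, and in ensuring the staircase argument and the per-row argument do not double-count the same ones beyond the $\binom{k-1}{2}$ correction already built into the bound; I would handle this by a clean accounting where the staircase uses one distinguished one per diagonal and the per-row bound uses the remaining ones, with an explicit check at the $k-1$ boundary rows/columns where fewer ones are available.
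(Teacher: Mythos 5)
Your high-level architecture matches the paper's: a staircase of exactly $m+n-1$ forced $1$ entries (the paper's Lemma~\ref{lem:staircase}) plus roughly $k-2$ additional $1$ entries per row, with the two bounds (per-row and per-column) combined via symmetry to get $\max(m,n)$. One bookkeeping point: the $\binom{k-1}{2}$ correction does not come from shortening the staircase, which has full length $m+n-1$; it comes from the fact that row $i$ can only support the extra $1$ entries for levels $1,\dots,\min(i,k-2)$, so the first $k-3$ rows contribute fewer than $k-2$ surplus ones, and $\sum_{l=1}^{k-2} l = \binom{k-1}{2}$ is exactly the shortfall.

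The genuine gap is in the step you yourself flag as the hard part, and your proposed justification for it would not work. You argue that a row with fewer than $k-2$ surplus ones "cannot supply the needed middle part of the pattern" when a $0$ in that row is flipped. But the copy of $J_k$ created by flipping a $0$ in row $i$ uses only the single new entry in row $i$; the other $k-1$ ones of the copy live in other rows, so nothing directly forces row $i$ to contain any further ones. The paper's actual mechanism is different and is the key missing idea: it assigns to each position $p$ above the staircase a \emph{level}, the largest $l$ such that $p$ is the bottom-left entry of a copy of $I'_l$ lying entirely above the staircase. Saturation and $J'_k$-avoidance force the level to be at most $k-2$ on $1$ entries and at most $k-1$ on $0$ entries, to start near $k-2$ at the left end of each row, to end at $1$ at the right end, and to change by at most one between horizontally adjacent positions; hence every level $1,\dots,k-2$ has a leftmost position in each sufficiently long row. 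The crux (Lemma~\ref{lem:level}) is that this leftmost position at each level must be a $1$ entry, proved by a delicate splicing argument: if it were a $0$, one combines the copy of $I'_l$ witnessing the neighbor's level with the copy of $I'_{k-2}$ that saturation produces through $p$, and exchanging column blocks between the two yields either a forbidden $I'_{k-1}$ above the staircase or a level strictly larger than $l$ at $p$ --- a contradiction either way. Without this level function and exchange argument (or a substitute of comparable strength), the "$k-2$ extra ones per row" claim is unsupported, so the proposal as written does not constitute a proof.
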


Notice that similar to the identity matrix, if $m<k$ or $n<k$ then $sat(J_k,m,n)=mn$. Also, $sat(J_k,m,n)\le (k -1)m+(k -1)n - (k - 1)^2$ by the trivial construction of putting $1$ entries in the first $k-1$ rows and last $k-1$ rows \cite{brualdi2020pattern}, which for the case $n=m$ is about twice as much as our lower bound \footnote{More precisely, if we substract $m+n-1$ from both the lower bound and the upper bound, which is the size of the maximal staircase present in every matrix saturating for $J_k$ (see the proof of Theorem \ref{thm:jk}), then the lower bound becomes exactly half of the upper bound.}. Due to the same construction $ex(J_k,m,n)\ge (k -1)m+(k -1)n - (k - 1)^2$ while $ex(J_k,m,n)=O(m+n)$ by \cite{marcus2004excluded}. We did not try to get an explicit upper bound from their proof as anyway it is conjectured that the lower bound is the right answer \cite{brualdi2020pattern}.

\begin{theorem}\label{thm:extendcorner}
		Let $P'=\begin{pmatrix}
			A   & {\bf 0}\\ {\bf 0} & 1	\end{pmatrix}$ and $P=\begin{pmatrix}
		A  & {\bf 0} & {\bf 0} \\ {\bf 0} & 1 &  0\\ {\bf 0}& 0 & 1	\end{pmatrix}$
	, for some $0$-$1$ submatrix $A$ (that is, we get $P'$ from $A$ by adding a new column and row at the end with a single $1$ entry in their intersection and we get $P$ from $P'$ the same way), then $$ex(P,m,n)=ex(P',m-1,n-1)+m+n-1,$$ $$sat(P,m,n)=sat(P',m-1,n-1)+m+n-1.$$
\end{theorem}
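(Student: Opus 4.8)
My plan is to prove each identity by a matching pair of inequalities. The inequality $ex(P,m,n)\ge ex(P',m-1,n-1)+m+n-1$ and the inequality $sat(P,m,n)\le sat(P',m-1,n-1)+m+n-1$ come from a single construction; the two opposite inequalities require analysing the structure of a densest $P$-avoiding matrix and of an arbitrary $P$-saturating matrix, respectively.

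\emph{The construction.} Fix an $(m-1)\times(n-1)$ matrix $M_0$ that avoids $P'$ and has $ex(P',m-1,n-1)$ ones (for the extremal bound), or that is saturating for $P'$ and has $sat(P',m-1,n-1)$ ones (for the saturation bound). Put $M_0$ into the top-left $(m-1)\times(n-1)$ corner of an $m\times n$ matrix $M$ and fill the whole last row and the whole last column of $M$ with $1$s, adding exactly $m+n-1$ ones. I will then check the following correspondence: for such an $M$, it avoids $P$ iff $M_0$ avoids $P'$, and it is saturating for $P$ iff $M_0$ is saturating for $P'$. The reason is that the unique $1$ of the last row of $P$, and the unique $1$ of the last column of $P$, are isolated; hence in any occurrence of $P$ inside $M$, on rows $r_1<\dots<r_{k+2}$ and columns $c_1<\dots<c_{l+2}$ (where $A$ is $k\times l$), the restriction to the first $k+1$ of its rows and first $l+1$ of its columns is already a copy of $P'$, and since $r_{k+2}\le m$ and $c_{l+2}\le n$ this copy lies inside $M_0$; conversely, any copy of $P'$ in $M_0$ is completed to a copy of $P$ in $M$ by appending the all-$1$ last row and last column. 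Running the same bookkeeping after flipping a single $0$ of $M$ (which necessarily lies in $M_0$) to a $1$ yields the maximality transfer, and hence $sat(P,m,n)\le sat(P',m-1,n-1)+m+n-1$ and $ex(P,m,n)\ge ex(P',m-1,n-1)+m+n-1$.

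\emph{The matching bounds.} The basic tool is a ``no bad $1$'' principle, immediate from the isolated corner of $P$: if a matrix $N$ avoids $P$, then for every $1$-entry at position $(i,j)$ the minor $N[1..i-1,1..j-1]$ avoids $P'$, since otherwise adjoining $(i,j)$ as the isolated bottom-right $1$ of $P$ exhibits a copy of $P$; equivalently, $N$ avoids $P$ iff no occurrence of $P'$ in $N$ sits strictly to the north-west of a $1$-entry. For $ex(P,m,n)\le ex(P',m-1,n-1)+m+n-1$ I will induct on $m+n$: an all-zero row or column of a densest $P$-avoiding $M$ is deleted and monotonicity of $ex(P',\cdot,\cdot)$ closes the step; in the remaining case every line carries a $1$, so $M$ is in particular maximal, and the ``no bad $1$'' principle together with ``filling any empty cell creates $P$'' pins the $1$-entries of $M$ down to a part confined to a strict north-west minor avoiding $P'$ plus a staircase of ``$P'$-occurrence corners'' (an antichain, by the principle), which can be repacked into an $(m-1)\times(n-1)$ grid at a total cost of $m+n-1$ ones. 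For $sat(P,m,n)\ge sat(P',m-1,n-1)+m+n-1$ the analogous analysis is run on an arbitrary matrix $M$ saturating for $P$: the principle and maximality force the last row and the last column of $M$ each into the shape ``a block of $1$s followed by a block of $0$s'', and tracking which of the three kinds of roles ($A$-role, $(k+1,l+1)$-role, outermost corner) an added $1$ plays when it creates a copy of $P$ lets one extract from $M$ an $(m-1)\times(n-1)$ matrix saturating for $P'$ together with $m+n-1$ ``boundary'' ones.

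\emph{The main obstacle.} Everything above is routine except the extraction step in the saturation lower bound. One must verify that the $(m-1)\times(n-1)$ matrix pulled out of a $P$-saturating $M$ is genuinely \emph{saturating} for $P'$ — still $P'$-free, and, more delicately, maximal, i.e. that filling any of its empty cells creates $P'$ — while simultaneously accounting for exactly $m+n-1$ boundary ones, neither fewer nor more; note that a $P$-saturating matrix need not have a full row or column, so this cannot be done by simply deleting a full line (already the extremal case of $P=I_3$ shows the relevant maximal matrices are ``double staircases'' with no full line). This is exactly where the \emph{double} application of the corner operation is essential: when an added $1$ in the small grid completes a copy of $P$ in $M$ but plays a role other than the outermost corner, that copy's outermost corner is still available inside the block-of-$1$s part of the boundary, and excising it returns a bona fide copy of $P'$ that uses the added $1$ — which would fail if $P$ had only one isolated corner. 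Making this case distinction exhaustive, and compatible with the possibly-not-full shape of the deleted lines, is the technical heart of the proof.
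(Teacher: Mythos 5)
Your first half is correct and coincides with the paper's: bordering an extremal (resp.\ saturating) $(m-1)\times(n-1)$ matrix for $P'$ with a full last row and column gives $ex(P,m,n)\ge ex(P',m-1,n-1)+m+n-1$ and $sat(P,m,n)\le sat(P',m-1,n-1)+m+n-1$, and your verification via the two isolated corners of $P$ is sound. The ``no bad $1$'' principle is also a correct observation. The gap is in the reverse inequalities. What you need --- and only assert --- is that in a matrix $M$ saturating for $P$ the set $S$ of $1$ entries with no $1$ entry strictly to their south-east is a \emph{full} monotone staircase with exactly $m+n-1$ cells, all cells below it being $0$. A priori $S$ is merely an antichain under strict dominance and can have anywhere from $1$ to $m+n-1$ elements, so your ``repacked \dots at a total cost of $m+n-1$ ones'' does not follow from anything you wrote; pinning $|S|$ down to exactly $m+n-1$ is a genuine lemma (Lemma~\ref{lem:staircase}) whose proof uses maximality of $M$: for $p\in S$ one flips the cell left of (or below) $p$, obtains a copy of $P$ in which that cell must be the bottom-right corner, and substitutes $p$ (or a $1$ below it) to reach a contradiction. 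Your parenthetical ``every line carries a $1$, so $M$ is in particular maximal'' is also a non sequitur, though maximality of a densest $P$-avoiding matrix holds for the trivial reason.

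Second, even granting the staircase, your extraction mechanism (forcing the last row and column into blocks of $1$s and ``excising'' corners) is not carried out, and you concede that making it exhaustive is ``the technical heart.'' The paper's mechanism is different and cleaner: delete the $m+n-1$ cells of $S$ and shift the all-zero region below $S$ diagonally by one unit up and one unit right; the remaining cells reassemble into an $(m-1)\times(n-1)$ matrix $M'$ with $w(M')=w(M)-(m+n-1)$. One then checks that $M'$ avoids $P'$ (a copy of $P'$ in $M'$ would have its bottom-right $1$ above $S$ and could be completed to a copy of $P$ in $M$ using a cell of $S$) and that $M'$ is maximal (flip the corresponding $0$ in $M$, use the created copy of $P$, and observe that the copy of $P'$ inside it survives the contraction --- this is where both isolated corners of $P$ are used, exactly as you anticipated). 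So your intuition about why the corner must be doubled is right, but the decomposition that makes the count come out to exactly $m+n-1$ and lands in an $(m-1)\times(n-1)$ grid is the missing idea; without it neither $ex(P,m,n)\le ex(P',m-1,n-1)+m+n-1$ nor $sat(P,m,n)\ge sat(P',m-1,n-1)+m+n-1$ is established.
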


First, applying this theorem repeatedly to $P'=I_1$ (note that $sat(I_1,m,n)=ex(I_1,m,n)=0$ trivially) we get that $sat(I_k,m,n)=ex(I_k,m,n)=(k - 1)(m + n - (k - 1))$ (if $m,n\ge k$), which gives another proof of Theorem \ref{thm:ik}.

By repeated application of Theorem \ref{thm:extendcorner} we can also generalize Theorem \ref{thm:jk}. Let $I'_k$  be the matrix we get from $I_k$ by reversing the order of its rows, that is, $I'_k$ has $1$ entries in its main skew diagonal and nowhere else (see Figure \ref{fig:ikjk}).

\begin{corollary}\label{cor:ikik}
	If $P$ has form $\begin{pmatrix}
		{\bf 0} & I_{k-1}  \\ I'_{l+1} & {\bf 0}
	\end{pmatrix}$, then
		$$sat(P,m,n)\ge (k-2)\cdot\max(m,n)+(l+1)(m+n)-O(1).$$
\end{corollary}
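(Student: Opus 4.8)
The plan is to recognise $P$ as the result of iterating the corner‑extension operation of \tref{extendcorner} starting from (a reflection of) $J_k$, and then to feed the lower bound of \tref{jk} into the resulting recursion. We may assume $l\ge 1$, since for $l=0$ the pattern $\begin{pmatrix}{\bf 0}&I_{k-1}\\ I'_1&{\bf 0}\end{pmatrix}$ is exactly $J_k$ and the claim reduces to \tref{jk}.

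The first step is to reorient $P$ so that its ``growing'' corner lands where \tref{extendcorner} expects it, namely at the bottom right. Reflecting all columns of a matrix changes neither $sat$ nor $ex$, and applying this reflection to $P=\begin{pmatrix}{\bf 0}&I_{k-1}\\ I'_{l+1}&{\bf 0}\end{pmatrix}$ interchanges the roles of the diagonal and the anti-diagonal blocks, producing $\tilde P=\begin{pmatrix}I'_{k-1}&{\bf 0}\\ {\bf 0}&I_{l+1}\end{pmatrix}$. Splitting off the last two rows and columns, $I_{l+1}=\begin{pmatrix}I_{l-1}&{\bf 0}&{\bf 0}\\ {\bf 0}&1&0\\ {\bf 0}&0&1\end{pmatrix}$, exhibits $\tilde P$ in the form $\begin{pmatrix}A&{\bf 0}&{\bf 0}\\ {\bf 0}&1&0\\ {\bf 0}&0&1\end{pmatrix}$ with $A=\begin{pmatrix}I'_{k-1}&{\bf 0}\\ {\bf 0}&I_{l-1}\end{pmatrix}$, which is precisely the hypothesis of \tref{extendcorner}; there the matrix $P'$ is $\begin{pmatrix}A&{\bf 0}\\ {\bf 0}&1\end{pmatrix}=\begin{pmatrix}I'_{k-1}&{\bf 0}\\ {\bf 0}&I_{l}\end{pmatrix}$, i.e.\ the column-reflection of $\begin{pmatrix}{\bf 0}&I_{k-1}\\ I'_{l}&{\bf 0}\end{pmatrix}$.

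Iterating this $l$ times peels the identity block down one index at a time, ending at $\begin{pmatrix}I'_{k-1}&{\bf 0}\\ {\bf 0}&1\end{pmatrix}$, which is the column-reflection of $J_k$ (the cyclic-shift permutation matrix reflected in its columns has a $1$ at $(i,k-i)$ for $i<k$ and at $(k,k)$). Each application of \tref{extendcorner} lowers both dimensions by $1$ and adds $m+n-1$ evaluated at the current dimensions, so for $m,n$ large enough, $sat(P,m,n)=sat(J_k,m-l,n-l)+\sum_{t=0}^{l-1}\bigl((m-t)+(n-t)-1\bigr)=sat(J_k,m-l,n-l)+l(m+n)-l^2$. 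Substituting \tref{jk}, $sat(J_k,m-l,n-l)\ge (k-2)\max(m-l,n-l)+(m-l)+(n-l)-1-\tfrac{(k-2)(k-1)}{2}$, and using $\max(m-l,n-l)=\max(m,n)-l$, the term $(k-2)\max(m,n)$ survives, the two copies of $m+n$ combine into $(l+1)(m+n)$, and everything left over is a constant depending only on $k$ and $l$ (one can check it equals $l^2+kl+1+\tfrac{(k-2)(k-1)}{2}$); this is exactly the claimed bound.

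Since the two ingredients are already proved, there is no deep obstacle here, but the bookkeeping has three spots that need care: performing the reflection correctly so that the peeled corner of $P$ really is at the bottom right (equivalently, one can observe directly that the lone $1$ in the bottom-left block of $P$ is an isolated corner whose removal of row and column yields the same matrix with $l$ decreased by one, and invoke the reflection-symmetric version of \tref{extendcorner}); checking that the base of the recursion is genuinely $J_k$ up to reflection and not some other permutation matrix such as $I'_k$; and tracking the shrinking dimensions through the summation so that the coefficient of $m+n$ comes out to exactly $l+1$.
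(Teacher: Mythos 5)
Your proposal is correct and follows exactly the route the paper intends (the paper only remarks ``by repeated application of Theorem~\ref{thm:extendcorner} we can also generalize Theorem~\ref{thm:jk}'' without writing out the details): reflect columns so the isolated corner of $I'_{l+1}$ sits at the bottom right, peel it off $l$ times via Theorem~\ref{thm:extendcorner} down to $J'_k$, and plug in Theorem~\ref{thm:jk}. Your identification of the base case as $J'_k$ and the bookkeeping $\sum_{t=0}^{l-1}\bigl((m-t)+(n-t)-1\bigr)=l(m+n)-l^2$ both check out, so the constant term you compute is consistent with the claimed $-O(1)$.
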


Notice that in Corollary \ref{cor:ikik} by setting $l=0$ we get $P=J_k$.

\bigskip
{\bf Semisaturation result.} The semisaturation problem turns out to be considerably simpler than the saturation problem. A dichotomy holds also in this case, but here we were able to characterize the patterns that have bounded semisaturation function.

\begin{theorem}\label{thm:ssat}
	Given a pattern $P$, $ssat(P,n)=O(1)$ if and only if all the following properties hold for $P$: 
	\begin{enumerate}
		\item The first and last row of $P$ both contain a $1$ entry that is the only $1$ entry in its column,
		\item The first and last column of $P$ both contain a $1$ entry that  is the only $1$ entry in its row,
		\item $P$ contains a $1$ entry that is the only $1$ entry in its row and column,		
	\end{enumerate}
 and $ssat(P,n)=\Theta(n)$ otherwise.
\end{theorem}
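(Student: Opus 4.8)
The plan is to prove the two directions separately, and for each direction to exploit the fact that semisaturation only asks for a \emph{new} copy of $P$ to appear, which makes it natural to work with one fixed ``base'' copy of $P$ sitting in a corner and a sparse global pattern forcing new copies everywhere else. For the ``if'' direction I would build an explicit $O(1)$-weight semisaturating matrix. Assume properties (1)--(3) hold. Fix a $1$-entry $p^*$ of $P$ that is alone in its row and column (property (3)); let $a$ be a $1$ in the first row of $P$ alone in its column and $b$ a $1$ in the last row of $P$ alone in its column (property (1)), and symmetrically $c,d$ for the first and last columns (property (2)). The construction $M$ will consist of a single copy of $P$ placed in the top-left $k\times l$ block, together with a constant number of extra $1$-entries placed in the first/last rows and first/last columns of $M$, positioned so that: flipping any $0$ in the interior uses $p^*$ together with the ``frame'' entries coming from $a,b,c,d$ to complete a new occurrence of $P$ whose cell set differs from the base copy; and flipping a $0$ on the boundary rows/columns completes a copy using the appropriate boundary witnesses. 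The key point is that because each of $a,b,c,d,p^*$ is solitary in a line, the rest of $P$'s $1$-entries can all be mapped into the fixed corner block, so only a bounded number of rows and columns of $M$ ever need to carry $1$-entries; hence $|M|=O(1)$.

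For the ``only if'' direction I would show that the failure of any one of (1)--(3) forces $ssat(P,n)=\Omega(n)$ (and the matching $O(n)$ upper bound is immediate since $ssat(P,n)\le sat(P,n)\le (k-1)n+(l-1)n$ by Theorem~\ref{thm:dichotomy}, and one can also just take a small perturbation of a near-full matrix). Suppose property (3) fails: every $1$-entry of $P$ shares its row or its column with another $1$-entry. Take any semisaturating $M$ and suppose it has $o(n)$ ones; then there are $\Omega(n)$ all-zero rows and $\Omega(n)$ all-zero columns. Pick an all-zero row $r$ and an all-zero column $c$, and flip the $0$ at $(r,c)$ to $1$. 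Any new occurrence of $P$ must use this new $1$-entry, so it is mapped to some $1$-entry $p$ of $P$; since $p$ has a companion $1$-entry in its row, that companion maps to another $1$ in row $r$ of $M'$ — but row $r$ was all zero and we only added one $1$ — contradiction, unless the companion is in $p$'s column, in which case the same argument with column $c$ applies. So no new copy appears, contradicting semisaturation. For the failure of (1) (say the first row of $P$ has no $1$ that is solitary in its column) I would instead flip a $0$ that lies in an all-zero \emph{column} but in a row that does contain $1$-entries, and argue that this new $1$ would have to play the role of a first-row entry of $P$ (any occurrence using it as a non-first-row entry would need a $1$ strictly above it in its column, impossible as the column was empty), forcing its column to contain the image of a second $1$ from $P$'s first row — again impossible. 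The cases for the last row and for the first/last columns are symmetric.

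The main obstacle I expect is the ``if'' direction: verifying that a single finite gadget simultaneously handles \emph{all} possible flipped $0$-entries — interior, the four boundary lines, and the four corners — while always producing a copy of $P$ \emph{genuinely different} from the fixed base copy. The delicate points are (i) making sure the flipped entry can always be assigned the role of $p^*$ (for interior flips) or of $a,b,c,d$ (for boundary flips) with the remaining $1$-entries of $P$ available in the corner block, and (ii) ensuring the resulting occurrence is new, which is where ``solitary in its line'' is used crucially: the line through the flipped cell contains no other $1$ of $M$ except possibly one boundary witness, so the occurrence we exhibit cannot coincide with the base copy. I would organize this by treating the interior case first using only $p^*$ plus a constant ``grid'' of witnesses in the top rows and left columns, then peeling off the four boundary cases using $a,b,c,d$ respectively, and finally checking the four corners as degenerate combinations of the boundary cases. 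Once the gadget is described, each verification is a short finite check rather than an asymptotic argument.
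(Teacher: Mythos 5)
Your overall strategy (a constant-size corner gadget for the upper bound; counting nonempty lines for the lower bound) matches the paper's, but both halves have concrete problems. In the ``if'' direction your gadget is never actually specified, and the sketch you give --- one copy of $P$ in the top-left block plus a constant number of extra $1$ entries confined to the first/last rows and columns --- cannot work in general: when an interior cell $(x,y)$ is flipped and made to play the role of $p^*=(i^*,j^*)$, you must realize the four pieces of $P$ lying in the quadrants around $(i^*,j^*)$ inside the four quadrants of $M$ around $(x,y)$; for example the part of $P$ strictly below row $i^*$ and strictly left of column $j^*$ may itself contain a $2\times 2$ all-ones submatrix, which cannot be hosted by $1$ entries lying only in the last row and first column of $M$. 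The clean fix (and the paper's construction) is to place a full $(k-1)\times(l-1)$ all-ones block in each of the four corners of $M$: for a flip in the top strip the new entry plays the solitary-in-its-column first-row entry of $P$ with the rest of $P$ mapped into the corner blocks, symmetrically for the other three strips, and for an interior flip it plays $p^*$ with the four quadrant-pieces of $P$ mapped into the four blocks; this gives $ssat(P,n)\le 4(k-1)(l-1)$. Your concern about the new occurrence coinciding with a ``base copy'' is moot: any occurrence in which the flipped cell is the image of a $1$ entry of $P$ is automatically new.

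In the ``only if'' direction, the property-(3) case is correct and matches the paper. But the property-(1) case as you argue it is wrong: you flip a cell of an all-zero column in some row ``that contains $1$ entries'' and claim that any occurrence using it as a non-first-row entry of $P$ would need a $1$ above it in its column --- it would not, since the rows of $P$ above that entry can be realized by $1$s in \emph{other} columns of $M$; e.g.\ for $P=\left(\begin{smallmatrix}1&0\\1&1\end{smallmatrix}\right)$ a flip deep inside an empty column can legitimately create a new copy via the entry in position $(2,2)$ of $P$. The correct move is to flip the \emph{topmost} cell $(1,c)$ of an empty column: being in row $1$ of $M$ it can only be the image of a first-row entry of $P$, and being the unique $1$ in column $c$ it can only be the image of an entry of $P$ that is alone in its column; if property (1) fails for the first row no such entry exists, so every column of a semisaturating $M$ must be nonempty and the weight is at least $n$. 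With these two repairs your argument becomes the paper's proof.
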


Observe that due to Observation~\ref{obs:mon}, Theorem \ref{thm:ssat} implies Claim~\ref{claim:twoineveryrow} and Theorem~\ref{thm:emptyfirstrow}, and  that every permutation pattern (including $I_k$ and $J_k$) has bounded semisaturation function.

\section{Saturation}

\begin{figure}[h]
\centering
\subfloat[The construction of a saturating matrix for $Q$. The white corner areas contain at least the $1$ entries at the locations of the dots.]{
\includegraphics[scale=0.7]{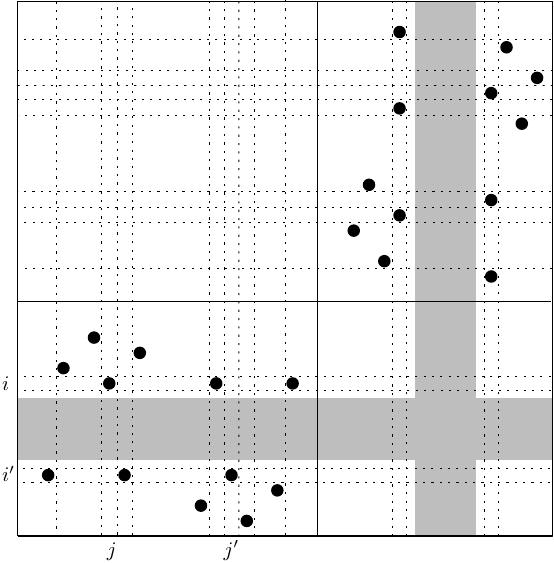}\label{fig:left}}
\hspace{2em}
\subfloat[Different occurrences of $Q$ after we add a $1$ entry into the gray horizontal strip. We distinguish the occurrences by color.]{
\includegraphics[scale=0.7]{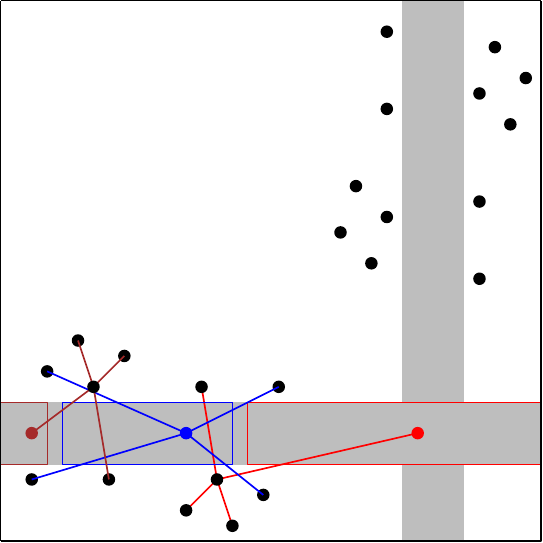}\label{fig:right}}
\caption{Saturating matrix $M$ in the proof of Theorem~\ref{thm:5x5}.}
\end{figure}

\begin{proof}[Proof of Theorem \ref{thm:5x5}]
Recall that 
 $Q=\begin{pmatrix} &\bullet&&&\\&&&&\bullet \\&&\bullet\\ \bullet\\&&&\bullet \end{pmatrix}$.
It is enough to give a construction of a saturating matrix $M$ which we give in Figure~\ref{fig:left}. The gray strips in the figure are arbitrarily wide and the 4 white corner areas contain at most a constant number of $1$ entries. We obtain the actual construction by replacing some of the $0$ entries in the corner white areas by $1$ entries so that the resulting matrix does not contain an occurrence of $Q$ and turning any additional $0$ entry in the white areas into a $1$ entry would introduce an occurrence of $Q$. 

It remains to check that the construction of the saturating matrix is correct.   Figure~\ref{fig:right} shows that the introduction of a $1$ entry in the horizontal gray strip introduces an occurrence of $Q$. This takes care also of the vertical strip by symmetry.
It remains to check that the matrix in Figure~\ref{fig:left} does not contain an occurrence of $Q$ which can be carried out by the following analysis.

Refer to Figure~\ref{fig:left}.
First, we observe that an  occurrence $Q'$ of $Q$ in $M$ must be contained in one of the two opposite ``quadrants'' (containing all the $1$ entries) determined by the pair of thin full lines crossing inside of $M$ in the figure.
Due to symmetry, it is enough to consider the case when $Q'$ is contained in the  bottom-left quadrant of $M$ denoted by $M'$. We claim that there exists two $1$ entries in the bottom-left quadrant that could possibly be the $1$ entry in the 3rd row and the 3rd column of $Q'$. Namely, $m_{ij}$ and $m_{i'j'}$ as depicted in the figure. Indeed, for every  such $1$ entry $m_{lk}$, there must exists in $M'$ four $1$ entries, one in every quadrant centered at $m_{lk}$, that is, four $1$ entries $m_{l_0k_0},m_{l_1k_1},m_{l_2k_2}$, and $m_{l_3k_3},$  such that $l>l_0$ and $k>k_0$, $l<l_1$ and $k>k_1$, $l>l_2$ and $k<k_2$, and $l<l_3$ and $k<k_3$, respectively. However, neither of these two $1$ entries can be contained in an occurrence of $Q$ by a simple case analysis.

We refrain from optimizing the exact value of  $sat(Q,n)$. The upper bound of $400$ in the statement of the theorem is obtained by observing that the white areas form a $20\times 20$ submatrix and in the construction $1$ entries are only in the white areas.
\end{proof}

%

\begin{proof}[Proof of Theorem \ref{thm:A00B}]
Let $M$ denote a $0$-$1$ matrix of size $n\times n$ saturating for $P$, that is it avoids $P$ but turning any $0$ entry of $M$ into $1$ results in a matrix $M'$ that contains $P$. We prove that $M$ cannot have an all-zero row or column which clearly proves the theorem. Recall that  $P=\begin{pmatrix}
      A  & 0 \\ 0 & B
    \end{pmatrix}$.
    
For the sake of contradiction suppose that $M=(m_{ij})$ contains  
an all-zero row (the proof of the non-existence of an all-zero column is analogous.)
Let the $l$-th row of $M$ be all-zero.
By turning $m_{lk}$  to 1, for an arbitrary $k, \ 1\le k\le n$, we obtain the matrix $M_k$ containing $P$. Note that $m_{l1}$ is contained in an occurrence of $A$ in $M_1$ that is the submatrix of an occurrence of $P$ while this is not true for $m_{ln}$.

It follows that there exists a maximum index $1\le k'<n$ such that 
 $m_{lk'}$ is contained in an occurrence of $A$
in $M_{k'}$ that is a submatrix of an occurrence $P'$  of $P$ in $M_{k'}$. Let $B'$ denote the occurrence of $B$ contained in $P'$.
Then $m_{l(k'+1)}$ is contained in an occurrence of $B$ in $M_{k'+1}$ that is a submatrix of an occurrence $P'' $  of $P$ in $M_{k'+1}$.  Let $A''$ denote the occurrence of $A$ contained in $P''$.
 
Note that the occurrence $A''$ of $A$ and $B'$ and $B$ are submatrices of an occurrence of $P$ in $M$, a contradiction. 
 \end{proof}

\begin{proof}[Proof of Theorem \ref{thm:dichotomy}]
For a pattern $P$ with all $0$'s, trivially $sat(P,m,n)=0$. Now let $P$ be a $k\times l$ (i.e., has $k$ rows and $l$ columns) not all-$0$ pattern, having a $1$ entry in the intersection of the $k'$th row and $l'$th column. 

Now let $M$ be the $m\times n$ matrix which has all $1$ entries in its first $k'-1$ and last $k-k'$ rows, first $l'-1$ and last $l-l'$ columns. It is easy to see that $M$ is saturating for $P$. Counting the $1$ entries in $M$ we get that $sat(P,m,n)\le (k-1)n+(l-1)m-(k-1)(l-1)$, showing that $sat(P,n,n), sat(P,m,n_0), sat (P,m_0,n)$ can grow at most linearly.

Next we prove that these functions are either bounded or have linear growth.

Let $k'=\max(k,l)$. Assume that $sat(P,n_0,n_0)< \frac{n_0}{k'-1}$ for some $n_0\ge k'-1$, we want to show that in this case $sat(P,n,n)\le sat(P,n_0,n_0)$ for $n\ge n_0$. Indeed, in this case there exists an $n_0\times n_0$ size saturating matrix $M_0$ ($n_0\ge k'$) with weight less than $\frac{n_0}{k'-1}$. Then there must be $k-1$ consecutive empty rows and $l-1$ consecutive empty columns in $M$. For any $n\ge n_0$ let $M$ be the $n\times n$ size matrix which we get from $M_0$ by replacing these empty rows and columns with as many new empty rows and columns as necessary.

We claim that $M$ is a saturating matrix as well. First, it cannot contain a copy of $P$ as (using that $P$ is non-empty) this copy uses at most $k-1$ new empty rows and $l-1$ empty columns. This implies that $P$ can already be found in $M_0$, a contradiction. On the other hand we claim that $M$ is a maximal matrix avoiding $P$. Indeed, if we could replace a $0$ entry with a $1$ in $M$ then we could find a corresponding $0$ entry in $M_0$ with the same property, contradicting that $M_0$ is a saturating matrix\footnote{We note that for maximality we do not need multiple consecutive empty rows and columns, one of each would be enough. Multiple empty rows and columns are only needed to show that $M$ avoids $P$.}. 

Similarly (by multiplying either only the empty rows or the empty columns) we can see that if $sat(P,m_0,n_0)< \frac{n_0}{k-1}$ for some $n_0\ge k-1$ and $m_0\ge l-1$, then $sat(P,m_0,n)\le sat(P,m_0,n_0)$ (for $n\ge n_0$) and if $sat(P,m_0,n_0)< \frac{m_0}{l-1}$ for some $n_0\ge k$ and $m_0\ge l$, then $sat(P,m,n_0)\le sat(P,m_0,n_0)$ (for $m\ge m_0$).

These together imply the theorem.
\end{proof}

\section{Saturation of $J_k$}
\begin{figure}
$$I_4=\begin{pmatrix}\bullet&&&\\&\bullet&&&\\&&\bullet \\&&&\bullet\end{pmatrix}~~I'_4=\begin{pmatrix} &&&\bullet\\&&\bullet\\&\bullet \\\bullet \end{pmatrix}~~J_5=\begin{pmatrix} &\bullet&&&\\&&\bullet \\&&&\bullet\\ &&&&\bullet\\\bullet \end{pmatrix} ~~ J'_5=\begin{pmatrix} &&&\bullet\\&&\bullet \\&\bullet\\ \bullet\\&&&&\bullet \end{pmatrix}$$
\caption{Examples for $I_k,I'_k,J_k$ and $J'_k$.}
\label{fig:ikjk}
\end{figure}

See the examples on Figure \ref{fig:ikjk} for the following definition.

\begin{definition}
	$I_k$ denotes the identity matrix of size $k\times k$. $I'_k$ denotes the matrix we get from $I_k$ by reflecting it vertically.
	
	$J_k$ is the matrix we get from $I_k$ by cyclically shifting its rows so that its first row follows the last row. $J_k'$ is the matrix we get from $J_k$ by reflecting it vertically. 
\end{definition}

\begin{definition}
	A \emph{staircase} in a matrix $M$ is a set $S$ of positions of $M$ such that the top-right and bottom-left positions of $M$ are in $S$ and for every position $p$ in $S$ the following two properties hold:
	\begin{itemize}
		\item exactly one of the neighboring positions above $p$ and right to $p$ is in $S$ except when $p$ is the top-right position,
		\item exactly one of the neighboring positions below $p$ and left to $p$ is in $S$ except when $p$ is the bottom-left position.
	\end{itemize}
\end{definition}

Note that a staircase in an $m\times n$ matrix $M$ has exactly $m+n-1$ positions.

\begin{lemma}\label{lem:staircase}
	Given a non-$(1\times 1)$ pattern $P$ in which the last row and column both contain exactly one $1$ entry, which is in their intersection. Then in any matrix $M$ saturating for $P$ there is a staircase $S$ in $M$ such that all positions in $S$ contain a $1$ entry and all positions that are below $S$ contain only $0$ entries.
\end{lemma}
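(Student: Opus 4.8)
\textbf{Proof proposal for Lemma~\ref{lem:staircase}.}
The plan is to construct the staircase $S$ greedily from the bottom-left corner of $M$ and argue that it stays a ``boundary'' between a region of $1$ entries and a region of $0$ entries. First I would observe that because the last row and last column of $P$ each contain exactly one $1$ entry, sitting in their intersection, the pattern $P$ is (up to the empty last row/column) the matrix $P'$ with a single $1$ added in a new bottom-right corner; this is exactly the situation of Theorem~\ref{thm:extendcorner} with the roles reversed, and it means: whenever we have an occurrence of $P'$ in some rows/columns of $M$ and there is a $1$ entry strictly below and strictly to the right of that occurrence, we get an occurrence of $P$. The key reformulation is therefore: $M$ avoids $P$ if and only if no $1$ entry of $M$ has an occurrence of $P'$ entirely in the rows above it and columns to its left. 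I would phrase the maximality of $M$ similarly.

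Next I would define $S$. For each column $j$, let $r(j)$ be the index of the lowest $1$ entry in column $j$ (and handle the degenerate case where a column is empty — I expect the saturation hypothesis plus the previous theorems, e.g. Theorem~\ref{thm:emptyfirstrow}-type arguments, to rule this out, or at least force the relevant columns to be nonempty; this needs a short check). The candidate staircase consists of the positions $(r(j),j)$ together with the vertical segments connecting consecutive ones, read off so that it satisfies the two local conditions in the definition. I then need three things: (i) that the bottom-left position of $M$ is in $S$ and the top-right position is in $S$; (ii) that all positions of $S$ carry a $1$; (iii) that everything strictly below $S$ is $0$. Item (iii) is immediate from the definition of $r(j)$ as the \emph{lowest} $1$ in its column. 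For (i) and (ii), the bottom-left corner must be a $1$: if it were $0$, then flipping it to $1$ creates a copy of $P$, but that copy would need an occurrence of $P'$ strictly above-left of the bottom-left corner, which is impossible since there is nothing above-left of it — contradiction with $M$ avoiding nothing there, so actually this forces the corner to already be $1$ only if we know $M$ is saturating and $n$ large; I would instead argue the corner is a $1$ because otherwise $M$ plus that entry still avoids $P$, contradicting maximality. A symmetric argument at the top-right corner handles that end. The monotonicity needed to make the connecting vertical segments legitimate — i.e.\ that the $r(j)$ don't jump around in a way that breaks the staircase shape — is where I expect to actually use saturation: if $r(j)$ and $r(j+1)$ were badly misaligned, there would be a $0$ entry just below a $1$ entry of $S$ whose flip cannot create $P$ (because there is no room below-right for the extra corner $1$), contradicting maximality. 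Making this precise is the crux.

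Concretely, the main obstacle is showing that $S$ as defined is genuinely a staircase in the technical sense — that between the lowest $1$'s of adjacent columns the profile is ``monotone'' enough. I would handle it by the following local argument: suppose a position $p=(i,j)\in S$ is a $0$ entry lying on one of the vertical segments (so $p$ is below the lowest $1$ of some column to its right and above or at a $1$ of its own column); I claim flipping $p$ to $1$ cannot introduce $P$. Any new occurrence of $P$ must use the new $1$ at $p$, and since the last row and column of $P$ each have a unique $1$ at their intersection, $p$ must play the role of some entry of $P$ that has $1$ entries of $P$ both below-right of it (or $p$ is the corner). If $p$ is forced to be the corner $1$ of $P$, we would need an occurrence of $P'$ strictly above-left of $p$; but by the choice of $S$ as the lowest-$1$ boundary, the rows above $p$ in columns left of $j$ contain, at worst, the same $1$'s that were available to the already-present lowest $1$'s, and $M$ avoided $P$, so no such $P'$ exists. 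If $p$ is some non-corner entry of $P$, then $P$ needs a $1$ strictly below and strictly right of $p$ in $M$, i.e.\ below $S$ — but below $S$ is all $0$. Either way we get a contradiction with maximality, so $p$ must have been a $1$, forcing the staircase shape. I would then wrap up by checking the two bullet conditions of the staircase definition hold at every position of $S$ by construction, and that $|S| = m+n-1$, completing the proof.
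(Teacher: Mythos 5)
Your overall strategy can be made to work, but as written there is a genuine gap exactly at the place you yourself call ``the crux''. Defining $S$ from the column-minima $r(j)$ makes ``everything below $S$ is $0$'' free, but it shifts all of the difficulty into showing that the sequence $r(j)$ is monotone (so that the ``vertical segments'' form a staircase at all) and that every position on those segments carries a $1$; neither is actually established. Your justification for monotonicity --- ``there is no room below-right for the extra corner $1$'' --- is false a priori: if $r(j+1)>r(j)$, the $0$ entry at $(r(j)+1,j)$ may well have $1$ entries strictly below and to its right in columns $j'>j+1$, so the flipped entry is not forced to play the corner role of $P$. Your argument for the segment positions is then circular: you rule out the non-corner role by saying the needed corner would lie ``below $S$'', but ``below $S$ is all $0$'' for positions to the right of column $j$ is precisely the monotone staircase structure you are in the middle of proving. (The empty-column case is also deferred, and it is not a one-line check in your setup.) The gap is repairable: first prove monotonicity outright by choosing, for a bad column $j$, a column $j^*>j$ whose lowest $1$ entry sits in the largest possible row $r(j^*)>r(j)$, flipping the $0$ at position $(r(j^*),j)$, and observing that if the new entry plays the corner role you may substitute the genuine $1$ at $(r(j^*),j^*)$ for it and find $P$ in $M$, while a non-corner role forces a $1$ of $M$ below row $r(j^*)$ in some column $j'>j$, contradicting the choice of $j^*$; the segment positions then follow by the same two-case flip, now legitimately using monotonicity to the right.

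For comparison, the paper sidesteps all of this by choosing $S$ differently: $S$ is the set of $1$ entries of $M$ having no $1$ entry strictly below and strictly to the right. With that definition ``nothing below $S$'' is essentially automatic, the memberships $(m,1),(1,n)\in S$ follow from maximality, and each local staircase condition is verified by one short flip-and-replace argument: flip the missing neighbouring position, note that it must play the corner role of $P$ precisely because the last row and column of $P$ carry no other $1$ entries (the structural fact you also identified), and slide that corner onto a genuine $1$ entry of $M$ to contradict $P$-avoidance. If you keep the column-minima definition you must supply the monotonicity proof sketched above; otherwise I would recommend switching to the extremal-entry definition.
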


\begin{proof}
	Let $m,n\ge k$ and the $m\times n$ matrix $M$ be saturating for $P$.
	Let $S$ be the set of positions of $1$ entries of $M$ for which there is no $1$ entry in $M$ that  has a larger row index and bigger column index. We claim that $S$, the set of these extremal $1$ entries, is a staircase.
	
	First observe that  positions $(m,1)$ and $(1,n)$ in $M$ must be $1$ entries and so these are in $S$.
	
	Let $p$ be a $1$ entry of $S$ different from $(m,1)$.  We need to show that exactly one of the neighboring positions $p'$ left to $p$ and $p''$ below $p$ is in $S$. Both of them cannot be in $S$ by definition of $S$. For the sake of contradiction, we assume that none of them is in $S$.
	
	First, we consider the case when $p$ is the lowest $1$ entry in its column, in which case $p$ is not in the first column. Then we use the saturation property for $p'$ to get an occurrence of $P$, in which $p'$ must be the bottom-right $1$ entry (by the definition of $S$) and then replacing $p'$ with $p$ we get an occurrence of $P$ in $M$ (contradiction).
	
	Second, we consider the case when $p$ is not the lowest $1$ entry in its column. Then there is a $1$ entry $q$ below $p''$. We use the saturation property for $p''$ to get an occurrence of $P$ in which $p''$ must be the bottom-right $1$ entry (by the definition of $S$) and then replacing $p''$ with $q$ we get an occurrence of $P$ in $M$ (contradiction).
	
	Symmetrically it follows that if $p$ be a $1$ entry of $S$ different from $(1,n)$ then exactly one of the neighboring positions $p'$ right to $p$ and $p''$ above $p$ is in $S$.
\end{proof}

By symmetry, whatever bounds we show about saturating $J_k'$, the same holds for saturating $J_k$. Let $M$ be an $m\times n$ matrix saturating for  $J_k'$. We need to show that for $w(M)$, the weight of $M$, we have $w(M)\ge  n(k-1)+m-1-\frac{(k-2)(k-1)}{2}$ as then  $w(M)\ge m(k-1)+n-1-\frac{(k-2)(k-1)}{2}$ will follow by symmetry, together implying the required lower bound.

Let $S$ be the staircase in $M$, whose existence is guaranteed by applying Lemma \ref{lem:staircase} with $P=J'_k$. There are exactly $m+n-1$ positions in $S$ all of which are $1$ entries, and the rest of the $1$ entries of $M$ are above $S$.

\begin{definition}
\label{def:level}
Let $p$ be a position in $M$ above $S$.
Let the matrix $M_p=M$ if $p$ is a $1$ entry, otherwise let $M_p$ be a matrix  that  we get from $M$ by turning $p$ into a  $1$ entry.
The position $p$ in $M$ has \emph{level} $l$,  $l(p)=l$ for short, if the following holds. The value of $l$ is the largest natural number such that there exists an occurrence of $I_l'$ in $M_p$ which lies completely above $S$ (that is, every $0$ and $1$ entry of the copy of $I_l'$ is above $S$) and $p$ is its bottom-left $1$ entry. A $1$ entry of $M$ has/is on level $l$ if its position has level $l$. Positions on $S$ and below $S$ have no level.
\end{definition}

\begin{lemma}\label{obs:monotone}
For the  matrix $M$ which is saturating for $J_k'$,	the following hold:
	\begin{itemize}
		\item[(i)] The level of every $1$ entry is at most $k-2$.
		\item[(ii)] The level of every $0$ entry is at most $k-1$.
		\item[(iii)] For every row $i\le m-1$, the first position in this row is on level $i$ if $i\le k-2$, and  otherwise on level $k-2$  or $k-1$.
		\item[(iv)] For every row $i\le m-1$, the last position in this row which is above $S$ is on level $1$.
		\item[(v)] If two positions above $S$ are next to each other in a row, $p'$ left to $p$, then $l(p)\le l(p')\le l(p)+1$.		\end{itemize}
\end{lemma}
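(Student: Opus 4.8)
The plan is to verify the five items in the order in which they depend on each other, using throughout the basic fact that an occurrence of $I'_\ell$ with a $1$ entry appended diagonally at the bottom-left corner (all still above $S$) is exactly an occurrence of $I'_{\ell+1}$, and that such an occurrence, once it reaches size $\ell=k-1$, combines with the staircase $S$ below it to produce a forbidden $J'_k$. More precisely, $J'_k$ consists of a copy of $I'_{k-1}$ with one extra $1$ entry placed strictly below and strictly to the right of its bottom-left $1$; if a copy of $I'_{k-1}$ lies entirely above $S$ with bottom-left entry $p$, then any $1$ entry of $S$ that is below-and-right of $p$ (such an entry exists since $S$ runs from the top-right position to the bottom-left position of $M$ and $p$ is above $S$) completes a $J'_k$ in $M$. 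This is the engine behind items (i) and (ii).

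For (i): if a $1$ entry had level $\geq k-1$, the copy of $I'_{k-1}$ above $S$ witnessing this, together with a point of $S$ below-right of its bottom-left corner, is a $J'_k$ in $M$, contradicting that $M$ avoids $J'_k$. For (ii): a $0$ entry $p$ of level $\ge k$ would, in $M_p$, give a copy of $I'_k$ above $S$; deleting the bottom-left $1$ entry $p$ of that copy and keeping the other $k-1$ points gives a copy of $I'_{k-1}$ above $S$ \emph{in $M$} whose bottom-left corner is still above $S$, and again a point of $S$ below-right of it completes a $J'_k$ already in $M$ — contradiction. (One must check the bottom-left corner of the reduced copy is genuinely above $S$, which follows because it lies weakly above and right of an entry of the original copy, all of which are above $S$; I would spell this monotonicity of ``above $S$'' out once at the start.) Item (v) is the local monotonicity statement: given $p'$ immediately left of $p$, both above $S$, any $I'_\ell$ witnessing $l(p)=\ell$ with bottom-left corner $p$ can have $p$ replaced by $p'$ (which sits weakly left, same row, hence still a valid $I'_\ell$ with the column of the bottom-left entry only moved left), so $l(p')\ge l(p)$; conversely an $I'_\ell$ with bottom-left corner $p'$ has all of its non-bottom-left entries strictly above-right of $p'$, hence weakly above-right of $p$ except we must shrink: its top $\ell-1$ entries together with $p$ form $I'_{\ell-1}$ with bottom-left corner $p$ (after discarding the entry that is in $p'$'s row, if any), giving $l(p)\ge l(p')-1$. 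I expect item (v) to need the most careful bookkeeping about which entry to discard, and to be the main obstacle.

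Item (iv) follows from (v) by induction along the row from right to left: the rightmost position above $S$ in row $i$ has no $1$ entry of $M$ strictly to its right in that row and weakly below it (by the staircase property, everything right of it in the row is on $S$ or below), so in $M_p$ it is the bottom-left $1$ entry of a copy of $I'_1$ and of nothing larger, hence level exactly $1$. For (iii): the first position $p$ in row $i$ has every $1$ entry of $M$ in rows $1,\dots,i-1$ available strictly above and (weakly, then one checks strictly using that $S$ is a staircase reaching column $1$ only in row $m$) to the right, so for $i\le k-1$ one can greedily build a copy of $I'_{\min(i,\,?)}$; the lower bound ``level $\ge i$'' for $i\le k-2$ comes from selecting one $1$ entry in each of rows $1,\dots,i-1$ forming a decreasing staircase above $S$ — such entries exist because each row $j\le i-1$ has a $1$ entry above $S$ (its first position, again above $S$ since $S$ meets column $1$ only in the last row), and monotonicity lets us thin them to a chain; the upper bound is just (i) for $i\le k-2$ and (ii) reconciled with (i) plus the fact that $p$ is the leftmost entry, forcing level $\ge 1$, for larger $i$, so the level is pinned to $\{k-2,k-1\}$ once we also rule out it being smaller, which again uses the chain-building argument capped by (i)/(ii). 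I would present (iii) last, as it packages the chain construction with the bounds already proved.
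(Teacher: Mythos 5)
Your engine for (i) and (ii) — a copy of $I'_{k-1}$ entirely above $S$ together with an entry of $S$ strictly below-and-right of its bottom-left corner yields a forbidden $J'_k$ — is exactly the paper's argument, and your plan for (v) (shift the bottom-left column left to get $l(p')\ge l(p)$; shrink to get $l(p)\ge l(p')-1$) is also the paper's. In the shrinking step the entry you may need to discard is the one in the \emph{column} of $p$, not ``the entry in $p'$'s row'' (there is no other entry in that row), and the correct count is $p$ together with the top $\ell-2$ entries; these are repairable slips. The genuine gaps are in (iii) and (iv).

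For (iv), you rule out a larger occurrence by noting that $p$ has no $1$ entry to its right \emph{in its own row}, but a copy of $I'_2$ with bottom-left corner $p$ needs a $1$ entry strictly above and strictly to the right of $p$, and such entries can exist above $S$ (the staircase may run horizontally at the height of $p$'s row, leaving plenty of above-$S$ positions up-and-right of $p$). What actually pins $l(p)=1$ is the clause in Definition~\ref{def:level} that \emph{every position} of the $\ell\times\ell$ occurrence, including its $0$ entries, lies above $S$: for $\ell\ge 2$ the bottom row of that submatrix contains a position in $p$'s row strictly to the right of $p$, which is on or below $S$. For (iii), the lower bounds ($l\ge i$ for $i\le k-2$, and $l\ge k-2$ otherwise) do not follow from ``select one $1$ entry in each of rows $1,\dots,i-1$ and thin to a chain'': the only entries you exhibit in those rows are the first positions $(j,1)$, which all sit in column $1$ and contain no up-right chain at all, and thinning an arbitrary one-entry-per-row set to a chain can destroy almost all of it. The missing idea is that saturation forces \emph{every} position $(a,b)$ with $a+b\le k-1$ to be a $1$ entry (the $t$-th anti-diagonal $1$ of $J'_k$ requires $a\ge t$ and $b\ge k-t$, hence $a+b\ge k$, so no copy of $J'_k$ can use such a position); these forced entries supply the explicit anti-diagonal chains $(i,1),(i-1,2),\dots,(1,i)$, respectively $(i,1),(k-3,2),\dots,(1,k-2)$, that witness the claimed levels. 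Without this triangle of forced $1$ entries the lower bounds in (iii) are unsupported.
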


\begin{proof}
	Notice that every position $(m,j)$ and $(j,n)$, such that $1\le j\le k-1$, is a $1$ entry, which then must be in $S$ and also every position $(i,j)$, such that $i+j\le k-1$, is a $1$ entry. Indeed, otherwise exchanging a $0$ entry in one of these positions to a $1$ entry should introduce a copy of $J_k'$, which is impossible. These imply $(iii)$.
	
	A $0$ entry with level at least $k$ or a $1$ entry with level at least $k-1$  would mean the existence of a copy of $I'_{k-1}$ in $M$ completely above $S$ which together with a $1$ entry on $S$ would be a copy of $J'_k$, a contradiction. These imply $(i)$ and $(ii)$. 
	
	For any position $p$ which is last in its row above $S$ it is true that the position to the right of $p$ is on $S$. Thus, a maximum size square matrix whose bottom-left position is $p$ and is completely above $S$ has size $1\times 1$. This implies $(iv)$.
	
	Finally, if a position $p$ is on level $l(p)$ then there is a copy of $I_{l(p)}$ in $M_p$ above $S$. This copy is in the intersection of the set or rows $R$ and set of columns $C$. Now if in $C$ we exchange the first column (note that  $p$ is in this column) with the column containing $p'$ (the position left to $p$) to get $C_1$, then in the intersection of $C_1$ and $R$ there is a copy of $I_{l(p)}$ in $M_{p'}$ above $S$, showing that $l(p')\ge l(p)$.
	
	Conversely, if $p'$ is on level $l(p')$ then there is a copy of $I_{l(p')}$ in $M_{p'}$ above $S$. This copy is in the intersection of the set or rows $R'$ and set of columns $C'$. Now if in $C'$ we exchange the first column (note that  $p'$ is in this column) with the column containing $p$ to get $C_2$ (if the column of $p$ was already in $C$ then we simply delete the column of $p'$ to get $C_2$), then in the intersection of $C_2$ and an appropriate subset of $R'$ of size $l(p')-1$ there is a copy of $I_{l(p')-1}$ in $M_{p}$ above $S$, showing that $l(p)\ge l(p')-1$. This implies $(v)$.
\end{proof}

\begin{figure}[ht]
	\centering	
		\includegraphics[scale=0.8]{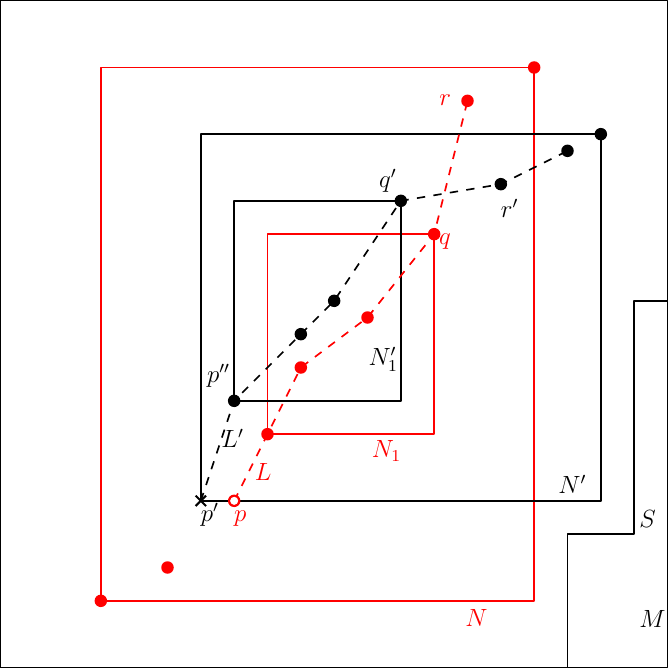}
	\caption{Intersecting $L$ and $L'$ from the proof of Lemma \ref{lem:level}.}	
			\label{fig:jk}
\end{figure}

\begin{figure}[ht]
	\centering	
	\includegraphics[scale=0.6]{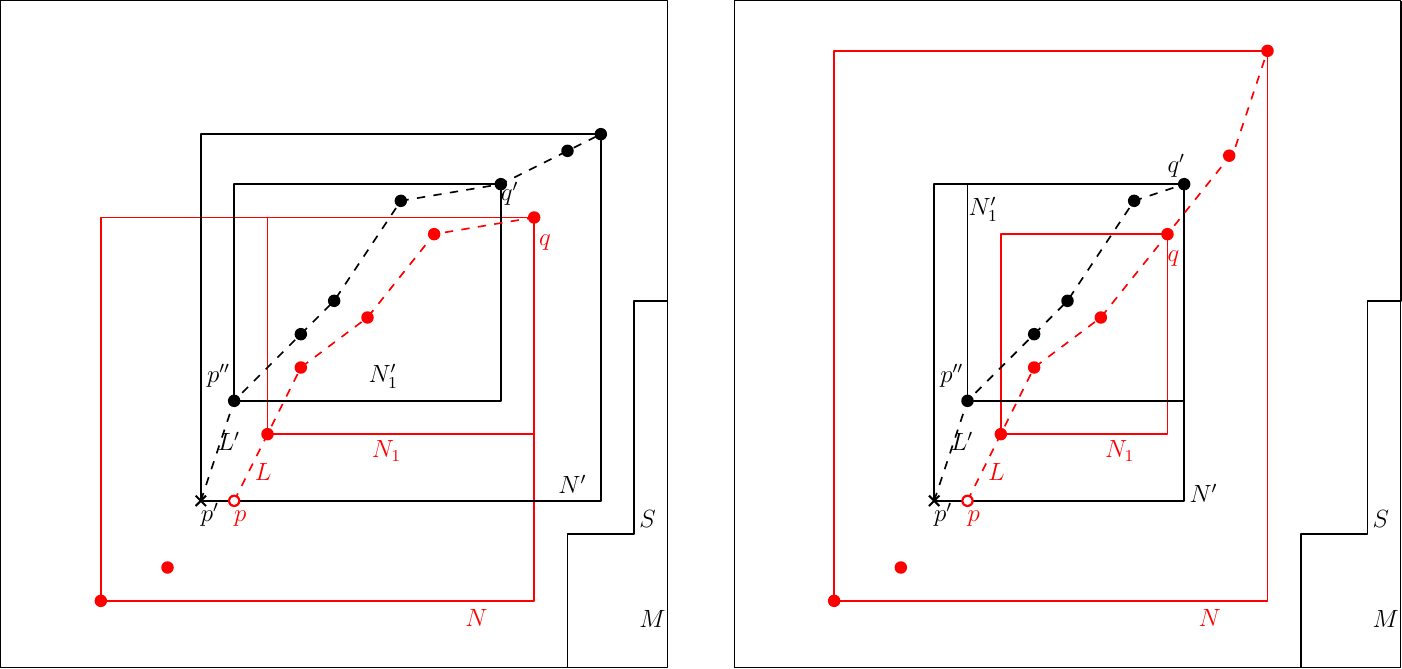}
	\caption{Proof of Lemma \ref{lem:level}.}	
	\label{fig:jk_nonint}
\end{figure}

It was noticed in 2023 by Shen-Fu Tsai \cite{TsaiPC} that the proof of the following lemma has gaps in the published version \cite{01sat}. Therefore, here we present a corrected proof following his idea.

\begin{lemma}\label{lem:level}
	For every $i\in \mathbb{N}$ and $l\in \mathbb{N}$ such that $1\le l\le k-2$ and $l\le i\le m-1$, the leftmost position on level $l$ in the $i$th row of $M$ is a $1$ entry.
\end{lemma}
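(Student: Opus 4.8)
The plan is to argue by contradiction: assume that the leftmost position $p=(i,j)$ on level $l$ in the $i$th row of $M$ is a $0$ entry, and run an induction on $i$ (with $l$ fixed). For the base case $i=l$ (the smallest admissible value), Lemma~\ref{obs:monotone}(iii) says the first position $(l,1)$ of row $l$ already lies on level $l$, and since levels are non-increasing from left to right along a row (Lemma~\ref{obs:monotone}(v)) this first position is itself the leftmost level-$l$ position; it is a $1$ entry because $l+1\le k-1$ (as in the proof of Lemma~\ref{obs:monotone}). For the inductive step $i>l$: since $M$ is saturating, $M_p$ contains a copy $J$ of $J'_k$, and $J$ must use $p$ (otherwise $M$ already contains $J'_k$). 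Decompose $J$ into its $I'_{k-1}$ block, occupying rows $r_1<\dots<r_{k-1}$ and columns $c_1<\dots<c_{k-1}$, together with its single lower-right $1$ entry at $(r_k,c_k)$ where $r_k>r_{k-1}$ and $c_k>c_{k-1}$.

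The first step is to locate $p$ within $J$. One rules out $p=(r_k,c_k)$: in that case the whole $I'_{k-1}$ block sits strictly up and to the left of $p$, hence entirely above $S$ (the region above $S$ is closed under moving up and left), so its bottom-left $1$ entry $(r_{k-1},c_1)$ is a $1$ entry of $M$ on level $\ge k-1$, contradicting Lemma~\ref{obs:monotone}(i). Thus $p=(r_a,c_{k-a})$ for some $1\le a\le k-1$, lying on the anti-diagonal of the block. Since $p$ has level exactly $l\le k-2$: if $a>l$, the sub-anti-diagonal of the block running up and to the right from $p$ (an $I'_a$ in $M_p$ with $p$ at its bottom-left) cannot lie entirely above $S$; if $a\le l$, splicing the sub-anti-diagonal of the block running down and to the left from $p$ with the $I'_l$ above $S$ that witnesses $l(p)$ (Definition~\ref{def:level}) yields an $I'_{k-a+l-1}\supseteq I'_{k-1}$ in $M_p$ whose bottom-left position $(r_{k-1},c_1)$ is a $1$ entry of $M$, which, if entirely above $S$, again contradicts Lemma~\ref{obs:monotone}(i). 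So in every case some anti-diagonal built from $1$ entries of $M_p$ and the $0$ entries between them must cross the staircase $S$.

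The heart of the argument, and the main obstacle, is to convert this crossing into a contradiction; this is where Figure~\ref{fig:jk} carries the weight. The difficulty is that the $1$ entries of $J$'s block are forced to lie on or above $S$ while its $0$ entries may dip below $S$, so one must trace the anti-diagonal step by step against the staircase. At the place where it first meets $S$, the plan is to reroute the copy $J$ so that it no longer uses $p$ but instead uses $1$ entries of $M$ that are guaranteed to be present --- namely entries of $S$ (which are $1$ entries by Lemma~\ref{lem:staircase}) and leftmost level-$l'$ entries in rows strictly above $i$ (which are $1$ entries by the inductive hypothesis) --- while Lemma~\ref{obs:monotone}(iii)--(v) controls the levels appearing along row $i$ and its predecessors; this produces a copy of $J'_k$ inside $M$, a contradiction. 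The remaining branch of the case analysis instead forces the level of some $1$ entry of $M$ up to $k-1$, contradicting Lemma~\ref{obs:monotone}(i). Hence $p$ must be a $1$ entry, which completes the induction.
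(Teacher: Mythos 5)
Your setup is sound as far as it goes: the base case, the observation that the forced copy of $J'_k$ in $M_p$ must use $p$, the exclusion of $p$ being the bottom-right corner entry (via closure of the region above $S$ under moving up and left, plus Lemma~\ref{obs:monotone}(i)), and the observation that in the remaining cases some anti-diagonal assembled from $1$ entries of $M_p$ fails to lie entirely above $S$. But the proof stops exactly where the lemma gets hard. The passage beginning ``the heart of the argument\dots is to convert this crossing into a contradiction'' is not an argument; it is an announcement that an argument is needed. ``Reroute the copy $J$ so that it no longer uses $p$ but instead uses $1$ entries of $M$ that are guaranteed to be present'' never specifies which entries replace which, why the replacements again form a $J'_k$ (you need strict row/column monotonicity across the splice point), or why the rerouted copy avoids $p$. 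In particular, it is not clear that entries of $S$ or inductively-known leftmost level-$l'$ entries in higher rows sit in the right relative positions to complete an anti-diagonal through the crossing point, and no claim to that effect is justified. As written, the contradiction is asserted, not derived.

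A structural symptom of the gap: your inductive step never uses that $p$ is the \emph{leftmost} level-$l$ position in its row. That hypothesis is what the paper's proof runs on. Leftmostness plus Lemma~\ref{obs:monotone}(v) gives $l(p')=l+1$ for the left neighbour $p'$, hence a witness $N$ of $I'_l$ lying above $S$ whose bottom-left entry sits in the column of $p$; saturation gives a second witness $N'$ of $I'_{k-2}$ extendable by $p$ to $I'_{k-1}$. The paper then locates the crossing of $N$ and $N'$ (the pair $q,q'$) and splices the two anti-diagonals there, obtaining in one branch an $I'_{k-1}$ above $S$ (so $M$ contains $J'_k$) and in the other a witness that $l(p)\ge l+1$ --- a contradiction either way, with no induction on $i$ and no rerouting through $S$. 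Without using $l(p')=l+1$ you only know $l(p)=l$, and a generic level-$l$ position above $S$ need not be a $1$ entry, so some use of leftmostness is unavoidable. To repair your write-up you would need to either carry out the rerouting explicitly (and verify it can always be completed, which I doubt in the stated form) or replace the missing step by the two-witness exchange argument.
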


\begin{proof}		
	Fix $i,l$ and let $p$ be the leftmost position on level $l\le k-2$ in row $i$. See Figure \ref{fig:jk} and Figure \ref{fig:jk_nonint} for the rest of the proof.
	We need to show that $p$ is a $1$ entry.
	
	Assume first that $p$ is the first position in row $i$. To reach a contradiction, assume $p$ is a $0$ entry. Then as $M$ is a saturating matrix, changing $p$ to a $1$ entry must introduce a copy of $J'_k$. As $p$ is in the first column, the only possibility for that is that the $1$ entry in $p$ is the leftmost $1$ entry in this copy of $J'_k$, and without loss of generality, the bottom-left $1$ entry in this copy of $J'_k$ is on $S$. This implies that $p$ is at least on the $(k-1)$th level, which contradicts that $p$ is on level $l$ with $l\le k-2$. Thus, $p$ must be a $1$ entry, as claimed.
	
	From now on we can assume that $p$ is not the first position in its row. To reach a contradiction, assume again that $p$ is a $0$ entry. Let the position immediately left to $p$ be $p'$. By the choice of $p$ we have $l(p')=l+1$ while $l(p)=l$. This implies that there must be an occurrence $N''$ of $I'_{l}$ in $M$ above $S$, whose leftmost $1$ entry is in a position $p''$, that is in the same column as $p$, and is above $p$ (not necessarily in the previous row). In addition, $N''$ extended with $p'$ forms an $N'$ that lies completely above $S$ and becomes a copy of $I'_{l+1}$ if we put a $1$ entry in $p'$. Furthermore, as $M$ is a saturating matrix, there must be an occurrence $N$ of $I'_{k-2}$ in $M$ above $S$ which could be extended to a copy of $I'_{k-1}$ by putting a $1$ entry in $p$. Notice that $p$ cannot be the position of the leftmost $1$ entry in $N$ as then the level of $p$ would be $k-1$, a contradiction.
	
	Connecting the $1$ entries one-by-one from left to right in $N$ starting from $p$ we get a polygonal path $L$. Connecting the $1$ entries one-by-one from left to right in $N'$ starting from $p'$ we get a polygonal path $L'$. Note that the first segment of $L'$, $p'p''$, is disjoint from $L$. 

 In the following paragraph we define $1$ entries $q\in L$ and $q'\in L'$ depending on the relative positions of the rightmost entries of $L$ and $L'$.
 
	Assume first that these two polygonal paths intersect. Then let $qr\in L$ and $q'r'\in L'$ be two segments that intersect such that $q$ is closer than $r$ to $p$ on $L$ and $q'$ is closer than $r'$ to $p'$ on $L'$, see Figure~\ref{fig:jk}.	Assume next that the two polygonal paths do not intersect. First, if the top-right vertex of $N'$ is to the right of the top-right vertex of $N$, then let $q$ be the top-right vertex of $N$ and let $q'$ be the rightmost vertex of $N'$, that is not to the right to $q$, see the left side of Figure~\ref{fig:jk_nonint}. Otherwise, if the top-right vertex of $N'$ is not to the right of the top-right vertex of $N$, then let $q'$ be the top-right vertex of $N'$ and let $q$ be the topmost vertex of $N$, that is not above $q'$, see the right side of Figure~\ref{fig:jk_nonint}.
	
	Having defined $q$ and $q'$ in all cases, let $N_1$ be the $1$ entries of $N$ in the columns of $N$ to the right of the column of $p$ and to the left (including) the column of $q$. Similarly, let $N'_1$ be the $1$ entries of  $N'$ in the columns of $N'$ to the right of the column of $p'$ and to the left (including) the column of $q'$.
		
    Both $N_1$ and $N_1'$ form occurrences of a vertically reflected identity matrix, say $N_1$ is an occurrence of $I'_{s}$, and $N'_1$ is an occurrence of $I'_{s'}$ for some $s,s'$.
	
	First, assume $s'> s$, then by replacing in $N$, the sub-matrix $N_1$ by $N'_1$ we get an occurrence of $I'_{k-1}$ above $S$, a contradiction (as then $M$ contains $J'_k$). Indeed, note that there is no 1 entry of $L$ in the same column as $q'$ that is above $q'$. Here, we consider carefully all the cases in the correct order, in particular this is not possible in the case depicted by the right side of Figure~\ref{fig:jk_nonint}, since in that case $L$ and $L'$ would intersect, and hence, we would end up in the case of Figure~\ref{fig:jk}.
	
	Second, if $s'\le s$, then by replacing in $N'$, the sub-matrix $N_1'$ by $N_1$ we get an occurrence of $I'_l$ with the following property. If we introduce a $1$ entry in position $p$, then this $1$ entry is extendable to an occurrence of $I'_{l+1}$ above $S$ having its bottom-left $1$ entry in position $p$, implying that the level of $p$ is at least $l+1$, a contradiction.
\end{proof}

\begin{proof}[Proof of Theorem \ref{thm:jk}]
There are $n+m-1$ many $1$ entries in $S$. Additionally, by Lemma \ref{lem:level} for every $1\le l\le k-2$ there are $n-l$ rows with a $1$ entry on level $l$. Altogether this is at least $$n+m-1+\sum_{l=1}^{ k-2}(n-l)=n+m-1+n(k-2)-\sum_{l=1}^{k-2}l=n(k-1)+m-1-\frac{(k-2)(k-1)}{2}$$ $1$ entries in $M$, as required.
\end{proof}

Let $L_l$ denote the set of leftmost entries at level $l$ in (the rows of) $M$. By changing ``bottom-left'' to ``top-right'' in Definition~\ref{def:level}, we analogously define $L_l'$ to be the set of top-most entries at level $l$ in (the columns) of $M$.
It holds that $L_1$ and $L_{k-2}'$ (and by symmetry also $L_{k-2}$ and $L_1'$)  are disjoint (expect for the top-left entry in the case $k=3$) as otherwise for a $1$ entry $p$ in $L_1\cap L_{k-2}$ there must be a $1$ entry in $M$ above $p$ and a copy of $I'_{k-2}$ whose top-left $1$ entry is left of $p$ such that these $(k-1)$ $1$ entries together with some $1$ entry in $S$ would form an occurrence of $J_k'$, a contradiction. 

In particular, for $k=3$ in an $m\times n$ matrix $M$ avoiding $J_3'$, by counting $1$ entries using Lemma \ref{lem:level} in both $L_1$ and $L_1'$ (their only common $1$ entry being the top-left $1$ entry of $M$), similarly as in the proof of Theorem~\ref{thm:jk} and adding the $1$ entries in $S$ we get the exact bound $2(m+n-2)$ for $J_3'$ and in turn also for $J_3$, obtained also by Brualdi and Cao~\cite{brualdi2020pattern}.

\begin{proof}[Proof of Theorem \ref{thm:extendcorner}]
	
	Take any $m-1\times n-1$ matrix $M'$ saturating for $P'$ and add a new column and row to it with all $1$ entries to get the matrix $M$ with $w(M)=sat(P',m-1,n-1)+m+n-1$. It is easy to see that $M$ is saturating for $P$ (notice that so far we did not need that $P'$ is an extension of $A$, we just needed that $P$ is the extension of $P'$). First, if $M'$ is chosen to have minimal weight then we get that $sat(P,m,n)\le w(M')+m+n-1= sat(P',m-1,n-1)+m+n-1$. Second, if $M'$ is chosen to have maximal weight then we get that $ex(P,m,n)\ge w(M')+m+n-1= ex(P',m-1,n-1)+m+n-1$.
	
	We are left to prove that $ex(P,m,n)\le ex(P',m-1,n-1)+m+n-1$ and $sat(P,m,n)\ge sat(P',m-1,n-1)+m+n-1$. Let $A$, $P$ and $P'$ be matrices as defined in the theorem and $M$ be a matrix saturating for $P$. Applying Lemma \ref{lem:staircase} to $P$ we get a staircase $S$ such that it has $(m+n+1)$ $1$ entries and there exists no $1$ entry that lies below and to the right from a $1$ entry of $S$. 
	
	Now let the set of positions of $M$ above $S$ be $M_1$ and the set of positions below $S$ be $M_2$ (notice that $M_2$ has only $0$ entries). By deleting $S$ and shifting every position (and the $0$ entries in it) of $M_2$ by one to the right and by one upwards we get the set of positions $M_2'$. Then $M_1$ together with $M_2'$ forms an $m-1\times n-1$ matrix $M'$ with $w(M')=w(M)-(m+n-1)$. We claim that $M'$ is saturating for $P'$. Using this we can finish the proof. First, if $M$ is chosen to have minimal weight then we get that $sat(P,m,n)=w(M)= w(M')+(m+n-1)\ge sat(P',m-1,n-1)+m+n-1$. Second, if $M$ is chosen to have maximal weight then we get that $ex(P,m,n)=w(M)= w(M')+(m+n-1)\le ex(P',m-1,n-1)+m+n-1$.
	
	So we are left to prove that $M'$ is saturating for $P'$. First we claim that $M'$ avoids $P'$. Assume on the contrary that $M'$ contains $P'$. As $M_2'$ is empty of $1$ entries, the bottom-right $1$ entry in this copy of $P'$ must be in $M_1$ in some position $p$. Then by the definition of $S$ there is a $1$ entry in $S$ in $M$ which is to the right and below $p$. This together with the copy of $P'$ forms a copy of $P$, contradicting that $M$ was $P$-free.
	
	Second we claim that turning any $0$ entry of $M'$ into a $1$ entry introduces a copy of $P'$ in $M'$. To see this take an arbitrary $0$ entry in $M'$. First assume that it is in $M_1$. Turning this into an $1$ entry in $M$ introduces a copy of $P$ in $M$ as $M$ was saturating for $P$. Necessarily the bottom-right $1$ entry of this copy is either on $S$ or above $S$. In both cases after deleting $S$ and shifting $M_2$ we still have a copy of $P'$ in $M'$, as required. Finally, if this $0$ entry is in $M_2'$ then again turning the respective $0$ entry into a $1$ entry in $M_2$ we get a copy of $P$ in $M$. In this copy the $1$ entry of $P$ which is in its penultimate row and column must be either on $S$ or above $S$. In both cases after deleting $S$ and shifting $M_2$ we still have a copy of $P'$ in $M'$, as required.
\end{proof}

\section{Semisaturation}

\begin{proof}[Proof of Theorem \ref{thm:ssat}]
	We call the $1$ entries required by the properties \emph{special}.
	First we show that if all the required properties hold for a $k\times l$ pattern $P$ then indeed $ssat(P,n)=O(1)$. If $n\le \max(2k-2,2l-2)$ then $M$ can simply be the all-$1$ $n\times n$ matrix. Otherwise, let $M$ be the matrix that has a $(k-1)\times (l-1)$ matrix with all $1$ entries in all four of its corners and has $0$ entries everywhere else. We claim that $M$ semisaturates $P$, that is, exchanging any $0$ entry in $M$ to a $1$ entry introduces a new copy of $P$. This shows that if all properties hold for $P$ then $ssat(P,n)\le (2k-2)(2l-2)=O(1)$. 
	
	Let us first assume that the new $1$ entry is in the first $k-1$ rows, that is, the coordinates of it are $(x,y)$ where $x\le k-1$ (and thus $l-1<y<m-l+1$). In this case it is easy to see that there is a new copy of $P$ in $M$ in which this new $1$ entry plays the role of the special $1$ entry required by the properties in the first row of $P$. Similarly, if the new $1$ entry is in the last $k-1$ rows or first $l-1$ columns or last $l-1$ columns then a symmetric argument works (note that the properties are also symmetric).
	Second, assume that the new $1$ entry has coordinates $(x,y)$ where $k-1<x<n- k+1$ and $l-1<y<m-l+1$. Then again there is a new copy of $P$, in which the new $1$ entry plays the role of the $1$ entry required by the last property.
	
	Now we show that if $P$ does not have at least one of the properties required, then $P=\Theta (n)$. First, by Theorem \ref{thm:dichotomy} we know that $ssat(P,n)\le sat(P,n)=O(n)$.\footnote{Note that the semisaturation function of any $k\times l$ pattern $P$ with a single $1$ entry attains the upper bound of Theorem \ref{thm:dichotomy}, that is, $ssat(P,m,n)=sat(P,m,n)=(k-1)n+(l-1)m-(k-1)(l-1)$ for such a pattern.} Let $M$ be a semisaturating matrix. First, assume that there is no special $1$ entry in the first row of $P$. Then we claim that every column of $M$ contains a $1$ entry. Indeed, if there would be a column with no $1$ entries then exchanging its first (top) $0$ entry with a $1$ entry could not introduce a new copy of $P$ as in such a copy this new $1$ entry would have to be special of this type. Thus in this case the weight of $M$ is at least $n$. If there is no special $1$ entry in the last row or first column or last column of $P$ then a symmetric argument shows that the weight of $M$ is at least $n$. Finally, if there is no special $1$ entry which is the single $1$ entry in its row and column then $M$ cannot contain both an empty row and an empty column. Indeed, in this case adding a new $1$ entry in the intersection of the empty row and empty column could not introduce a new copy of $P$ as in such a copy this new $1$ entry would have to be special of this type. Thus, the  weight of $M$ is at least $n$ in this case as well. We conclude that if any of the properties fail then $ssat(P,n)\ge n$ and thus $ssat(P,n)=\Theta(n)$.
\end{proof}

\section{Discussion}

We proved  constant vs. linear dichotomy for the saturation function in the context of $0$-$1$ matrices.
On the one hand, we identified large classes of forbidden matrices with linear saturation functions. 
On the other hand, up to left-to-right and bottom-to-top symmetry we discovered only one  
matrix, the permutation matrix $Q$, whose saturation function is in $\Theta(1)$. The only other matrix with the constant saturation function that we are aware of is the $1\times 1$ identity matrix $I_1$.

We suspect that there exist many more forbidden matrices with a constant saturation function, but we
were unable to find any such matrix besides $Q$ and $I_1$. Here, a natural problem is to characterize
forbidden permutation matrices with a linear/constant saturating function. To the best of our knowledge, this is open already if we restrict ourselves to $4\times 4$ permutation matrices.
Here, an interesting question is to decide whether  $Q$ becomes linear after the removal of the 3rd column and 3rd row from it.

Finally, it is not known if there exists an algorithm
that always stops that decides for a given $0$-$1$ matrix $P$ whether $sat(P,n)=O(1)$. 
To this end, notice that similarly as in the case of the pattern $Q$, if a pattern $P$ contains at most $s$ empty rows and empty columns (e.g., in case of $Q$ we have $s=0$), then if a matrix $M$ is saturating for $P$ and contains $s+1$ consecutive empty rows and columns, then any matrix $M'$ that we get from $M$ by further multiplying these empty rows and columns must also be saturating for $P$ and thus $P$ has a constant saturating function. Let us call such a matrix $M$ a \emph{witness} of $P$ having a constant saturating function. Notice also that if $P$ has a constant saturating function then it necessarily has a witness for this, and so a $P$ has a constant saturating function if and only if it has a witness matrix for this.

Thus, to give a decision algorithm, it would be enough to prove that there exists a function $f(k,l)$ such that for any $k\times l$ matrix $P$ if there exists some witness $M$ for $P$ then there is also an  $f(k,l)\times f(k,l)$ matrix $M'$ which is a witness for $P$. Indeed, then it would be enough to check all $f(k,l)\times f(k,l)$ $0$-$1$ matrices, which is $2^{\left(f(k,l)^2\right)}$ matrices. If any of them is a witness matrix, then $P$ has a constant saturating function. If this is not the case, then $P$ has a linear saturating function.


\bibliographystyle{plainurl}
\bibliography{saturation}
\end{document}